\newcommand{\floor}[1]{\left\lfloor #1\right\rfloor}
\newcommand{\set}[1]{\left\{#1\right\}}
\newtheorem{thm}{Theorem}[section]
\newtheorem{lem}[thm]{Lemma}
\newtheorem{prop}[thm]{Proposition}
\newtheorem{cor}[thm]{Corollary}
\theoremstyle{definition}
\newtheorem{rem}[thm]{Remark}
\newcommand{\R}{\mathbb{R}}
\newcommand{\N}{\mathbb{N}}
\newcommand{\E}{\mathbb{E}}
\newcommand{\Z}{\mathbb{Z}}
\newcommand{\F}{\mathbb{F}}
\newcommand{\ve}{\varepsilon}
\title{Avoiding short progressions in Euclidean Ramsey theory}
\author[$\dagger$]{Gabriel Currier}
\author[$\star$]{Kenneth Moore}
\author[$\ddag$]{Chi Hoi Yip}
\affil[$\dagger$,$\star$]{Department of Mathematics\\ University of British Columbia\\ Vancouver  V6T 1Z2\\ Canada}
\affil[$\ddag$]{School of Mathematics\\ Georgia Institute of Technology\\ Atlanta, GA 30332\\ United States}
{
    \makeatletter
    \renewcommand\AB@affilsepx{: \protect\Affilfont}
    \makeatother

    \affil[ ]{Email ids}

    \makeatletter
    \renewcommand\AB@affilsepx{, \protect\Affilfont}
    \makeatother

    \affil[$\dagger$]{currierg@math.ubc.ca}
    \affil[$\star$]{kjmoore@math.ubc.ca}
    \affil[$\ddag$]{cyip30@gatech.edu}
}
\newcommand{\subjclass}[2][1991]{%
  \let\@oldtitle\@title%
  \gdef\@title{\@oldtitle\footnotetext{#1 \emph{Mathematics subject classification.} #2}}%
}
\newcommand{\keywords}[1]{%
  \let\@@oldtitle\@title%
  \gdef\@title{\@@oldtitle\footnotetext{\emph{Key words and phrases.} #1.}}%
}
\subjclass[2020]{05D10, 52C10, 11B25}
\keywords{Euclidean Ramsey theory, arithmetic progression}
\begin{document}
\maketitle

\begin{abstract}
We provide a general framework to construct colorings avoiding short monochromatic arithmetic progressions in Euclidean Ramsey theory. Specifically, if $\ell_m$ denotes $m$ collinear points with consecutive points of distance one apart, we say that $\E^n \not \to (\ell_r,\ell_s)$ if there is a red/blue coloring of $n$-dimensional Euclidean space that avoids red congruent copies of $\ell_r$ and blue congruent copies of $\ell_s$. We show that $\E^n \not \to (\ell_3, \ell_{20})$, improving the best-known result $\E^n \not \to (\ell_3, \ell_{1177})$ by F\"uhrer and T\'oth, and also establish  $\E^n \not \to (\ell_4, \ell_{14})$ and $\E^n \not \to (\ell_5, \ell_{8})$ in the spirit of the classical result $\E^n \not \to (\ell_6, \ell_{6})$ due to Erd{\H{o}}s et. al. We also show a number of similar $3$-coloring results, as well as $\E^n \not \to (\ell_3, \alpha\ell_{6889})$, where $\alpha$ is an arbitrary positive real number. This final result answers a question of F\"uhrer and T\'oth in the positive.
\end{abstract}

\section{Introduction}

Let $\E^n$ denote the $n$-dimensional Euclidean space, that is, $\R^n$ equipped with the Euclidean norm. Suppose that we have a coloring of $\E^n$ using $r$ colors. The field of Euclidean Ramsey theory concerns itself with what types of configurations (monochromatic, rainbow, etc.) must be present in such a coloring. Much of the current research was introduced and developed by Erd{\H{o}}s, Graham, Montgomery, Rothschild, Spencer, and Straus in a series of papers \cite{E73, E75a, E75b}. We also refer the readers to a nice survey by Graham \cite[Chapter 11]{GOT18}.

One of the most commonly studied configurations is denoted $\ell_m$, which consists of $m$ collinear points with consecutive points of distance one apart. In other words, $\ell_m$ is an $m$-term arithmetic progression with common difference $1$. Let $K_1,\dots,K_r$ be configurations in $\E^n$. We write $\E^n \rightarrow (K_1,\dots,K_r)$ if, for any coloring of $\E^n$ with $r$ colors, there exists a monochromatic (congruent) copy of $K_i$ in color $i$ for some $i$. If there is a coloring where this does not hold, we write $\E^n \not \to (K_1,\dots,K_r)$.

We first recall some positive results related to arithmetic progressions in Euclidean Ramsey theory. Erd{\H{o}}s et. al. \cite[Theorem 8]{E73} proved that $\E^3 \to (T,T)$ for any three-point configuration $T$. Juh\'{a}sz \cite{J79} proved that $\E^2 \to (\ell_2, K)$ for any $K$ with $4$ points, and Csizmadia and T\'{o}th \cite{CT94} showed there exists a configuration $K'$ with $8$ points, such that $\E^2 \not \to (\ell_2,K')$. Tsaturian \cite{T17} showed that $\E^2 \to (\ell_2, \ell_5)$, and Arman and Tsaturian \cite{AT18} showed that $\E^3 \to (\ell_2, \ell_6)$.  As remarked in \cite[Theorem 1.3]{CF19}, Szlam \cite{S01} proved that
there is a constant $c>0$, such that $\E^n \to (\ell_2, \ell_{2^{cn}})$ for all positive integers $n$. Very recently, the authors~\cite{CMY} showed that $\E^2 \to (\ell_3, \ell_3)$, making new progress on the conjecture that $\E^2 \to (T,T)$ for any non-equilateral three-point configuration $T$ by Erd{\H{o}}s et. al. \cite{E75b}. This conjecture is still widely open; see \cite{CMY, E75b, S76} and \cite[Theorem 11.1.4 (a)]{GOT18} for the known cases of this conjecture.

The main contributions of this paper are some new negative results related to arithmetic progressions in Euclidean Ramsey theory. In their seminal paper, Erd{\H{o}}s et. al. \cite{E73} used spherical colorings (that is, where all points of the same distance from the origin have the same color) to prove a few negative results. In particular, they proved that $\E^n \not \to (\ell_6,\ell_6)$ for all positive integers $n$ \cite[Theorem 12]{E73}. Recently, more delicate colorings have been used to prove other negative results. Conlon and Fox \cite{CF19} showed that $\E^n \not \to (\ell_2, \ell_{10^{5n}})$ for all positive integers $n$. Their proof is based on a random periodic coloring. Recently, Conlon and Wu \cite{CW23} showed that $\E^n \not \to (\ell_3, \ell_{10^{50}})$ using spherical coloring with the help of probabilistic methods as well. Very recently, F\"uhrer and T\'oth \cite{FT24} improved their result to $\E^n \not \to (\ell_3, \ell_{1177})$ using a deterministic spherical coloring. We note that these spherical colorings which have been so useful in proving results (in all dimensions) about $\ell_m$ for $m \ge 3$ are generally not available for $\ell_2$; this is because such colorings always contain large monochromatic spheres (in both colors), and will thus contain monochromatic copies of $\ell_2$.

In this paper, we provide a general framework and obtain several new negative results. Our first main result is a further improvement of the result by F\"uhrer and T\'oth \cite{FT24}. Inspired by the result $\E^n \not \to (\ell_6,\ell_6)$ of Erd{\H{o}}s et. al. \cite{E73}, we also prove new negative results related to $\ell_4$ and $\ell_5$.

\begin{thm}\label{thm:l3}
$\E^n \not \to (\ell_3, \ell_{20})$, $\E^n \not \to (\ell_4, \ell_{14})$, and $\E^n \not \to (\ell_5, \ell_{8})$.
\end{thm}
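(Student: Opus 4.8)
The plan is to build a \emph{spherical} coloring, i.e.\ one of the form $x \mapsto f(\|x\|^2)$ for a single function $f\colon [0,\infty) \to \{\text{red},\text{blue}\}$, and to reduce the choice of $f$ to a finite combinatorial search. The starting point is the normal form for squared distances along a progression: a congruent copy of $\ell_m$ in $\E^n$ is $\{a+jd : 0 \le j \le m-1\}$ with $\|d\|=1$, and its squared norms are $\|a\|^2 + 2j\langle a,d\rangle + j^2 = (j+t)^2 + v$, where $t := \langle a,d\rangle$ and $v := \|a\|^2 - t^2 \ge 0$ (the inequality is Cauchy--Schwarz); conversely, when $n \ge 2$ every pair $(t,v)\in\R\times[0,\infty)$ occurs, and $n=1$ only forces $v=0$. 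Hence a spherical coloring $f$ avoids a red $\ell_r$ and a blue $\ell_s$ at once precisely when, for all $t\in\R$ and $v\ge 0$, the word $\big(f((j+t)^2+v)\big)_{j=0}^{r-1}$ is not all red and $\big(f((j+t)^2+v)\big)_{j=0}^{s-1}$ is not all blue. A single such $f$ then settles $\E^n\not\to(\ell_r,\ell_s)$ for every $n$ simultaneously.

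To make this finite, the natural choice is to take $f$ \emph{periodic} of some period $\tau$, so that $f = \chi_{\mathcal R}$ for a red set $\mathcal R \subseteq \R/\tau\Z$ taken to be a finite union of arcs. Since $v$ runs over $[0,\infty)$, its residue mod $\tau$ runs over all of $\R/\tau\Z$; subtracting the common offset $t^2+v$ from the $m$ squared norms leaves $\{2tj+j^2 : 0\le j\le m-1\}$, and writing $q:=2t$ this is a free translate (by the residue of $t^2+v$) of $\{j^2+qj : 0\le j\le m-1\}$, with $q$ also ranging freely over $\R/\tau\Z$. Thus the condition becomes: \textbf{(R)} $\mathcal R$ contains no translate of $\{j^2+qj : 0\le j \le r-1\}$ for any $q$; and \textbf{(B)} the blue set $\mathcal B := (\R/\tau\Z)\setminus\mathcal R$ contains no translate of $\{j^2+qj : 0 \le j\le s-1\}$ for any $q$ --- and the three cases $(r,s)\in\{(3,20),(4,18),(5,10)\}$ are handled in parallel. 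Because $\mathcal R$ is a finite union of arcs, (R) and (B) are each \emph{decidable by a finite computation}: after branching over the finitely many ways the points $j^2+qj$ (for $0\le j\le r-1$, resp.\ $0\le j\le s-1$) can be distributed among the arcs of $\mathcal R$, resp.\ $\mathcal B$, the requirement that they all lie there is a system of linear inequalities in $(x,q)$, with $x$ the translation parameter, and one checks that every such system is infeasible.

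The remaining step is to \emph{exhibit} a workable $\tau$ and $\mathcal R$ in each case. These are found by computer search over the period and the arc endpoints, and the content of the theorem is exactly that suitable $(\tau,\mathcal R)$ exist and are certified by the finite check above. This framework --- spherical coloring, the $(j+t)^2+v$ normal form, and the reduction to forbidden ``parabolic patterns'' $\{j^2+qj\}$ on a circle --- is what will then be reused for the $3$-coloring results and for $\E^n\not\to(\ell_3,\alpha\ell_{6889})$.

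I expect the genuine difficulty to be the \emph{construction} of $\mathcal R$, because (R) and (B) pull in opposite directions. Already the three-point pattern for $\ell_3$ is very rigid: taking $q=-2$ and $q=-1$, where $\{j^2+qj:0\le j\le 2\}$ degenerates to a pair at distance $1$, respectively $2$, forces $\mathcal R$ to contain no two points at distance $1$ and no two at distance $2$, on top of infinitely many honest three-point constraints --- so every obvious periodic red set fails. Yet $\mathcal R$ must simultaneously be spread out enough that $\mathcal B$ meets every parabolic $20$-point pattern. Balancing this, over the two-parameter family of candidate periods and arc systems, is where the real work lies, and it is precisely what the finiteness reduction of the first two steps makes tractable.
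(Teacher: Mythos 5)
Your framework is, up to a change of coordinates, exactly the paper's. The paper colors $x$ red iff $\lfloor d|x|^2\rfloor\in S\pmod p$; this is precisely a $\tau$-periodic spherical coloring with $\tau=p/d$ and red set $\mathcal R=\bigcup_{s\in S}[s/d,(s+1)/d)$, i.e.\ a union of arcs with rational endpoints. Your normal form $(j+t)^2+v$ for the squared norms along $\ell_m$, with $t\in\R$, $v\ge0$ free for $n\ge2$, is the paper's Lemma~\ref{lem:ell3} and Corollary~\ref{cor:ellN}. The one genuine technical difference is how finiteness of the verification is achieved: the paper's Proposition~\ref{prop:checking} uses Dirichlet approximation to reduce the real pair $(b,c)$ to rationals with denominator $\le 2N+1$; you instead propose branching over arc assignments and solving linear feasibility systems in $(x,q)$. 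That works too, but note one imprecision: since each point $j^2+qj+x$ must be reduced mod $\tau$, you must branch not only over which arc each point lands in but also over the integer ``wrap'' $m_j$ chosen to place it in $[0,\tau)$; there are $O(j)$ choices per coordinate, so this is still finite, but it is a step you elided. Also, $q$ genuinely lives in $\R/\tau\Z$ because $(q+\tau)j\equiv qj\pmod\tau$, so your restriction of $q$ to a compact domain is correct.

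The genuine gap is that you never construct the coloring. You correctly reduce Theorem~\ref{thm:l3} to the existence of a suitable $(\tau,\mathcal R)$, and you correctly identify the strong rigidity of the $\ell_3$ side (your $q=-1,-2$ observation that $\mathcal R$ may contain no two points at distance $1$ or $2$ in $\R/\tau\Z$), but you leave ``exhibit $(\tau,\mathcal R)$'' as a black box whose output is precisely the theorem. The paper completes this step by taking $p=29,d=7,S=\{0,\dots,6\}$ for $(\ell_3,\ell_{20})$; $p=25,d=4,S=\{0,\dots,6\}$ for $(\ell_4,\ell_{18})$; and $p=19,d=2,S=\{0,\dots,6\}$ for $(\ell_5,\ell_{10})$, and then runs the finite check of Proposition~\ref{prop:checking}. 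Relatedly, your proposed search ranges over a real period $\tau$ and real arc endpoints, which is not a priori finite; you would need a further discretization (e.g.\ restrict to rational $\tau$ and endpoints with bounded denominator, or simply adopt the paper's integer parametrization via $(p,d,S)$) to make ``computer search over the period and the arc endpoints'' a finite procedure. Without explicit parameters and the accompanying verification, the claim is not proved.
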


Erd{\H{o}}s et. al. showed that $\E^n \not \to (\ell_4, \ell_4, \ell_4)$ \cite[Theorem 12]{E73}. In the same spirit, we prove the following theorem.

\begin{thm}\label{thm:l3l3}
$\E^n \not \to (\ell_3, \ell_3, \ell_{8})$, $\E^n \not \to (\ell_3, \ell_4, \ell_{7})$, and $\E^n \not \to (\ell_3, \ell_5, \ell_5)$.     
\end{thm}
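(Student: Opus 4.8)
The plan is to use a \emph{spherical} coloring, that is, a coloring $\chi$ of $\E^n$ for which $\chi(x)$ depends only on $|x|^2$. If $P_0,\dots,P_{m-1}$ is a congruent copy of $\ell_m$, say $P_k=P_0+kv$ with $|v|=1$, then
\[
|P_k|^2=\bigl(|P_0|^2-\langle P_0,v\rangle^2\bigr)+\bigl(\langle P_0,v\rangle+k\bigr)^2=\rho+(b+k)^2,
\]
where $\rho:=|P_0|^2-\langle P_0,v\rangle^2\ge 0$ and $b:=\langle P_0,v\rangle\in\R$; conversely, every set $\{\rho+(b+k)^2:0\le k\le m-1\}$ with $\rho\ge 0$ and $b\in\R$ is the set of squared distances of the points of a copy of $\ell_m$ in $\E^n$ for each $n\ge 2$, and the case $\rho=0$ accounts for $n=1$. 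So it suffices, for each of the three assertions, to exhibit one coloring $\bar\chi\colon[0,\infty)\to\{1,2,3\}$ of the squared-radius axis whose color-$i$ class contains no such set of size $m_i$, where $(m_1,m_2,m_3)$ equals $(3,3,8)$, $(3,4,7)$, or $(3,5,5)$; then $\chi(x):=\bar\chi(|x|^2)$ does the job in every dimension at once.

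To reduce this to a finite search, I would restrict to colorings of the form $\bar\chi(y)=\gamma\bigl(\lfloor y\rfloor\bmod N\bigr)$ for a modulus $N$ and a map $\gamma\colon\Z/N\Z\to\{1,2,3\}$. The key identity is $\lfloor\rho+(b+k)^2\rfloor=\lfloor\rho+b^2+2bk\rfloor+k^2$ (valid because $k^2\in\Z$), together with the elementary fact that each difference $\lfloor\rho+b^2+2b(k+1)\rfloor-\lfloor\rho+b^2+2bk\rfloor$ lies in $\{\lfloor 2b\rfloor,\lfloor 2b\rfloor+1\}$. These show that if some copy of $\ell_{m_i}$ is monochromatic of color $i$, then $\gamma$ takes the value $i$ on every element of a set of the shape
\[
\bigl\{\,(c+kA+N_k+k^2)\bmod N\ :\ 0\le k\le m_i-1\,\bigr\},
\]
for some $c,A\in\Z/N\Z$ and some non-decreasing integer sequence $0=N_0\le N_1\le\dots\le N_{m_i-1}$ with all increments in $\{0,1\}$. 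For fixed $N$ there are only finitely many such \emph{forbidden patterns} (the list may be pruned using $\rho\ge b^2$ and the symmetry $k\mapsto m_i-1-k$, but this is unnecessary), so the whole problem becomes the finite task of finding, in each of the three cases, a pair $(N,\gamma)$ for which no forbidden pattern of length $m_i$ is monochromatic in color $i$.

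I would locate such $(N,\gamma)$ by a computer search over moderately sized $N$, and then verify the finite pattern list for the chosen pair. The main obstacle is this search: color $1$ is forced to be very sparse. For instance, the length-$3$ pattern with $N_k\equiv 0$ and $A\equiv-1$ (resp.\ $A\equiv-2$) reduces to $\{c,c+2\}$ (resp.\ $\{c,c-1\}$), so if color $1$ is to avoid all its forbidden patterns it can contain no two elements at mutual distance $1$ or $2$ in $\Z/N\Z$, and the remaining length-$3$ patterns rule out further small configurations. One therefore needs $N$ large enough that the complement of such a sparse color-$1$ class still splits into two classes, neither of which is constantly colored on a long forbidden pattern, while keeping $N$ small enough for the search to terminate. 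Once a valid $(N,\gamma)$ is produced in each case, the coloring $\chi(x)=\gamma(\lfloor|x|^2\rfloor\bmod N)$ witnesses the corresponding statement of Theorem~\ref{thm:l3l3} for all $n$, completing the proof.
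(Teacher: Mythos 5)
Your reduction to spherical colorings and the derivation of the forbidden patterns are sound (the identity $\lfloor\rho+(b+k)^2\rfloor=\lfloor\rho+b^2+2bk\rfloor+k^2$ and the increment bound are both correct, and together they recover, for $\ell_3$, precisely the condition $x_0-2x_1+x_2\in\{1,2,3\}\pmod N$ that the paper obtains from Lemma~\ref{lem:ell3}). However, there are two concrete gaps.

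First, you restrict attention to colorings of the form $\gamma(\lfloor y\rfloor\bmod N)$, which in the paper's notation is the special case $d=1$. The paper's actual colorings for two of the three assertions use $d=2$: for $(\ell_3,\ell_3,\ell_8)$ and $(\ell_3,\ell_4,\ell_7)$ it takes $\lfloor 2|x|^2\rfloor\bmod 10$. That coloring is \emph{not} expressible as $\gamma'(\lfloor|x|^2\rfloor\bmod N)$ for any $\gamma'$ or $N$, since $\lfloor 2y\rfloor$ is not a function of $\lfloor y\rfloor$. So your search space is a strict subfamily of the paper's, and there is a real reason to worry it is too small: with $d=1$ a color class avoiding $\ell_3$ must in fact have all pairwise gaps at least $4$ (taking $s_1=s_2=a$, $s_3=a+j$ gives $s_1-2s_2+s_3=j\in\{1,2,3\}$ for $j=1,2,3$), i.e.\ density at most $1/4$; with $d=2$ the paper gets away with $S=\{0,1\}$ of density $1/5$ inside $\Z/10\Z$ with consecutive elements, which is genuinely more flexible. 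Whether some larger $N$ rescues the $d=1$ case is unproven in your write-up.

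Second, and more fundamentally, no explicit $(N,\gamma)$ is produced; you say you \emph{would} locate them by computer search, but the theorem is not proved until the search is run and a witness is exhibited (the paper does this with $p=10$, $d=2$, $S=\{0,1\}$, $T=\{5,6\}$; $p=10$, $d=2$, $S=\{0,1\}$, $T=\{4,5,6\}$; and $p=8$, $d=1$, $S=\{0,4\}$, $T=\{5,6,7\}$). Both issues are fixable at once by inserting the scaling parameter: color by $\gamma(\lfloor d|x|^2\rfloor\bmod N)$, re-derive the patterns with $A=\lfloor 2db\rfloor$, $c=\lfloor d(\rho+b^2)\rfloor$ and coefficient $d$ on the $k^2$ term, and then actually carry out the finite verification for the listed parameter triples.
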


Our techniques can be used to prove results of a similar flavor for other geometric configurations; see Section~\ref{sec:other}. In particular, we show that there is a red/blue coloring of $\E^n$ that does not contain a red copy of any parallelogram in a $2$-parameter family of parallelograms that includes $\ell_4$ (as a degenerate parallelogram), and does not contain a blue copy of $\ell_{21}$.

More generally, for a positive real number $\alpha$ and an integer $m\geq 2$, we use $\alpha\ell_m$ to denote a set consisting of $m$ collinear points with consecutive points of distance $\alpha$ apart. F\"uhrer and T\'oth \cite[Theorem 2]{FT24} showed $\E^n \not \to (\ell_3, \alpha\ell_{8649})$ for a positive $\alpha$ under some assumptions on $\alpha$. In the concluding remark of their paper, they mentioned that for each $\alpha>0$, their method can be modified to show that there is $m(\alpha)$ finite, such that $\E^n \not \to (\ell_3, \alpha\ell_{m(\alpha)})$. They asked if there is a uniform upper bound on $m(\alpha)$. We answer their question in the positive.

\begin{thm}\label{thm:alpha}
$\E^n \not \to (\ell_3, \alpha\ell_{6889})$ holds for all positive real numbers $\alpha$. 
\end{thm}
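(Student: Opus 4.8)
The plan is to build, for each fixed $\alpha>0$, a \emph{spherical} coloring of $\E^n$ — one in which a point's color depends only on the square of its distance from the origin — with no red $\ell_3$ and no blue $\alpha\ell_{6889}$. The coloring will depend on $\alpha$; the content of the theorem is that the single value $6889$ works for all of them. I would start from the arithmetic dictionary underlying the framework of the previous sections: if $a,a+d,\dots,a+(m-1)d$ is a congruent copy of $\gamma\ell_m$ (so $\|d\|=\gamma$), its squared radii are $\|a+kd\|^2=\gamma^2(k+c)^2+w$ for $k=0,\dots,m-1$, with $c=(a\cdot d)/\gamma^2\in\R$ and $w=\|a\|^2-(a\cdot d)^2/\gamma^2\ge 0$, and conversely every $c\in\R$, $w\ge 0$ is realized in $\E^n$ once $n\ge 2$. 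So, writing the coloring as $\phi\colon[0,\infty)\to\{\text{red},\text{blue}\}$, ``no blue $\gamma\ell_m$'' says exactly that no upward parabola with leading coefficient $\gamma^2$ is blue at $m$ consecutive integers, while ``no red $\ell_3$'' says no such parabola with leading coefficient $1$ is red at $3$ consecutive integers. It also helps to record the dilation identity $\E^n\not\to(\ell_3,\alpha\ell_m)\iff\E^n\not\to(\alpha^{-1}\ell_3,\ell_m)$ (replace $\phi(t)$ by $\phi(\alpha^2 t)$): it reduces the theorem to producing, for every $\beta>0$, a coloring with no red $\beta\ell_3$ and no blue $\ell_{6889}$, and it makes clear that the case $\beta=1$ is already handled by Theorem~\ref{thm:l3}.

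For the construction I would take $\phi$ periodic in the squared radius, with period $P$ and with red occupying a short window of length $L$ per period. The centred red $\beta\ell_3$ has squared radii $w$ and $w+\beta^2$, which forces $L<\beta^2$; and a one-line second-difference computation shows that if in addition $P\ge 4\beta^2$, then no red $\beta\ell_3$ of any shape can occur, since the second difference $2\beta^2$ of its squared radii then stays, modulo $P$, farther than $2L$ from $0$. The real work is to choose $P=P(\beta)$ so that the complementary blue part still kills every blue $\ell_{6889}$: for all $c\in\R$ and $w\ge 0$ the residues $(k+c)^2+w \bmod P$, $k=0,\dots,6888$, must meet the red window. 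Two opposing constraints appear. If $P$ is too large, a near-centred parabola — whose $6889$ squared radii occupy an interval of width only $\approx\big(\tfrac{m-1}{2}\big)^2$ — fits inside a single blue stretch, so $P$ must be comparable to $\big(\tfrac{m-1}{2}\big)^2$; but then the residues $\{(k+c)^2\bmod P\}$ must be dense on the circle (gaps below $L$) for every $c$, which is a Weyl-type equidistribution statement for generic $c$ and, for the structured values $c\in\tfrac12\Z$ where these residues become a translate of the quadratic residues modulo $P$, a classical gap bound for quadratic residues (so $P$ should, for instance, be chosen prime). Because no single window can meet both requirements across all of $(0,\infty)$, I would have $\phi$ carry such windows at a geometric hierarchy of scales, so that for each $\beta$ one scale handles the red $\beta\ell_3$ while the blue obstruction is distributed among the scales; balancing the window lengths, the ratio of the hierarchy, and $m$ against one another is precisely what forces $m=6889=83^2$.

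The main obstacle — and where essentially all of the effort goes — is making one family of colorings work for every $\beta$ at once: the small-$\beta$ (large-$\alpha$) regime, where the red windows shrink toward zero width yet the parabola residues must remain dense, pulls against the large-$\beta$ (small-$\alpha$) regime, where the windows may be wide but one must stop a slowly moving parabola from coasting through a blue stretch or leaping over a window in a single long stride, and the equidistribution inputs weaken in both limits. Threading every parabola (every $c$ and $w$) and every scale $\beta$ with a uniform length $6889$, rather than one depending on $\beta$, is the heart of the matter, and is exactly what the general framework is built to deliver; the value $83^2$ is tight for this scheme, being dictated by the worst $\beta$.
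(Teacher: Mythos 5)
Your proposal captures the right arithmetic dictionary — squared radii along a progression form a parabola, a spherical (i.e., periodic-in-$|x|^2$) coloring, and the one-sided dilation identity reducing to red $\beta\ell_3$ vs.\ blue $\ell_m$ — but the actual construction you sketch diverges from the paper's, and the divergence is where the substance lies.

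The paper does \emph{not} try to build one multi-scale coloring that serves all $\beta$ at once. Instead, it rescales \emph{both} sides simultaneously: rather than avoiding $\ell_3$ and $\alpha\ell_M$, it seeks $\alpha_{\mathrm{red}}$ and $\alpha_{\mathrm{blue}}$ with $\alpha=\alpha_{\mathrm{blue}}/\alpha_{\mathrm{red}}$ and avoids $\alpha_{\mathrm{red}}\ell_3$ and $\alpha_{\mathrm{blue}}\ell_M$. This is the crucial extra degree of freedom your proposal discards after reducing to a one-sided rescaling. The constraints then become two simultaneous Diophantine conditions on the scale: $\alpha_{\mathrm{red}}^2\in[1,1.5]\pmod p$ and $\alpha_{\mathrm{blue}}^2$ within $(4p^5)^{-1}$ of an integer not divisible by $p$. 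For each $\alpha$ these are met by elementary casework (irrational $\alpha^2$ via Weyl equidistribution of $m/\alpha^2$; rational $\alpha^2=a/b$ by a modular construction) using one of a \emph{finite} set of seven primes $p\in\{47,59,67,71,73,79,83\}$, and the exponent is $p^2\le 83^2=6889$ — uniform precisely because the prime set is finite, not because a fixed multi-scale coloring threads every $\beta$.

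The other missing piece is the finite-field structure on the blue side. You reach for a gap estimate for quadratic residues and a hierarchy of windows; the paper instead fixes $S\subset\F_p$ as a short arithmetic progression of step $5$ and verifies directly (finitely) that $(b\F_p^2+c)\cap S\ne\emptyset$ for all $b,c$. The key analytic lemma then shows that along an $\alpha_{\mathrm{blue}}\ell_{p^2}$ (via a Dirichlet approximation to the linear coefficient with denominator $d<p$, and passing to the arithmetic subprogression with step $d$), the integer parts of the squared radii necessarily cover a translate of $b\F_p^2$ modulo $p$ — because consecutive terms drift by at most $1/p$ and a full sweep of $p$ indices hits each residue class of a quadratic image. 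This is an \emph{exact covering} statement delivered by Dirichlet approximation, not a density/gap argument, and it is what makes $p^2$ terms suffice. Your sketch has no mechanism that yields the exact number $83^2$ except the assertion that ``balancing forces it,'' whereas in the paper $6889$ is literally $83^2$ for the largest prime needed, with a separate $\epsilon>0$ branch handled by $p=59$ and $M=2p^2-2p+1=6845<6889$.

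In short: the multi-scale hierarchy you propose is not what the paper does, is not specified concretely enough to verify, and would face serious difficulties (for instance, nested windows at different scales will interact badly with the red-side requirement that second differences avoid the red set modulo the period). The two ideas you need are (i) rescale both colors and treat the scale as a free variable satisfying two congruence/approximation conditions, and (ii) use a finite list of primes with $S$ chosen so that every affine image of the squares meets $S$, together with Dirichlet approximation to force the blue progression's residues through a full quadratic sweep.
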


\section{A general framework}\label{sec:framework}

In this section, we prove a couple of useful lemmas and then outline the proof of Theorems \ref{thm:l3} and \ref{thm:l3l3}. In general, we will need efficient ways to show there are no short monochromatic progressions in our colorings. The following results, Corollary \ref{cor:ell3} and Proposition \ref{prop:checking} are our tools to do this. To start, the following lemma is a standard application of the law of cosines; see \cite{CW23} and \cite[Lemma 1]{FT24}.

\begin{lem}\label{lem:ell3}
Let $x,y,z\in\mathbb{E}^n$ form a copy of $\alpha\ell_3$. Then we have $$|x|^2-2|y|^2+|z|^2=2\alpha^2.$$
\end{lem}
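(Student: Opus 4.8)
The plan is to reduce the statement to the single fact that, in a congruent copy $\{x,y,z\}$ of $\alpha\ell_3$, the point $y$ is the midpoint of $x$ and $z$ with $|x-z| = 2\alpha$. Indeed, by definition of $\alpha\ell_3$ the three points are collinear, $y$ lies between $x$ and $z$, and $|x-y| = |y-z| = \alpha$; hence $y = \tfrac12(x+z)$ and $|x-z| = 2\alpha$. Note that although $\{x,y,z\}$ is only a \emph{congruent} copy, the claimed identity concerns the squared distances to the \emph{fixed} origin, so we cannot argue by translation-invariance — the midpoint relation must be used directly.

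First I would substitute $y = \tfrac12(x+z)$ into the left-hand side and expand with the Euclidean inner product: since $|y|^2 = \tfrac14\left(|x|^2 + 2\langle x,z\rangle + |z|^2\right)$, we obtain
$$|x|^2 - 2|y|^2 + |z|^2 = \tfrac12|x|^2 - \langle x,z\rangle + \tfrac12|z|^2 = \tfrac12|x-z|^2,$$
and then substituting $|x-z| = 2\alpha$ gives $\tfrac12(2\alpha)^2 = 2\alpha^2$, as claimed. Equivalently, one can invoke the parallelogram law $|x+z|^2 + |x-z|^2 = 2|x|^2 + 2|z|^2$ together with $|x+z| = 2|y|$, or apply the law of cosines to the two triangles with vertices $0,x,y$ and $0,z,y$ and use that the angles at $y$ are supplementary, so their cosines cancel upon adding; all of these routes are the same one-line computation.

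I do not expect any genuine obstacle here: this is a routine identity, and the only point requiring a moment's care is the observation above that one must genuinely use the collinearity/midpoint structure rather than any isometry-invariance, since $|\cdot|$ is not translation-invariant. No case analysis or additional input is needed, and the argument is identical for every $n$.
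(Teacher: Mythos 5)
Your proof is correct, and it follows the same approach the paper has in mind: the paper does not write out a proof, merely noting that the lemma ``is a standard application of the law of cosine'' (citing Conlon--Wu and F\"uhrer--T\'oth), and your midpoint/parallelogram-law computation is the same one-line identity, as you yourself observe by listing the law-of-cosines argument as an equivalent route. Your remark that one must genuinely use the midpoint structure rather than translation-invariance of the left-hand side is a nice point of care, though the computation itself is routine.
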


Next, we deduce two useful corollaries of Lemma~\ref{lem:ell3}.

\begin{cor}\label{cor:ell3}
Let $x,y,z\in\mathbb{E}^n$ form a copy of $\ell_3$ and let $d$ be a positive integer. Then we have 
    \begin{equation*}
        \label{eq:modifiedcoloring}
        \floor{d|x|^2}-2 \floor{d|y|^2}+\floor{ d|z|^2} \in \{2d-1,2d,2d+1\}. 
    \end{equation*}
\end{cor}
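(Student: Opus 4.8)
The plan is to start from Lemma~\ref{lem:ell3} with $\alpha=1$, which gives $|x|^2 - 2|y|^2 + |z|^2 = 2$ for any copy of $\ell_3$. Multiplying through by $d$ yields $d|x|^2 - 2d|y|^2 + d|z|^2 = 2d$. The strategy is then to compare each term $d|w|^2$ with its floor $\floor{d|w|^2}$, tracking the accumulated error.

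First I would write $d|x|^2 = \floor{d|x|^2} + \{d|x|^2\}$ and similarly for $y$ and $z$, where $\{t\} \in [0,1)$ denotes the fractional part. Substituting into the identity $d|x|^2 - 2d|y|^2 + d|z|^2 = 2d$ gives
\[
\floor{d|x|^2} - 2\floor{d|y|^2} + \floor{d|z|^2} = 2d - \{d|x|^2\} + 2\{d|y|^2\} - \{d|z|^2\}.
\]
The error term $E := -\{d|x|^2\} + 2\{d|y|^2\} - \{d|z|^2\}$ satisfies $E > -1 - 1 = -2$ (since $\{d|x|^2\}, \{d|z|^2\} < 1$ and the middle term is $\geq 0$) and $E < 2$ (since $\{d|y|^2\} < 1$ and the other two terms are $\leq 0$). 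Hence $2d - 2 < \floor{d|x|^2} - 2\floor{d|y|^2} + \floor{d|z|^2} < 2d + 2$, and since the left-hand side is an integer it lies in $\{2d-1, 2d, 2d+1\}$, which is exactly \eqref{eq:modifiedcoloring}.

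There is essentially no obstacle here; the only mild subtlety is being careful with the direction of the inequalities when the fractional parts are multiplied by the negative coefficients on $|x|^2$ and $|z|^2$ versus the positive coefficient $2$ on $|y|^2$. One should double-check that the strict inequalities $\{t\} < 1$ (rather than $\leq 1$) are what prevent the endpoints $2d \pm 2$ from being attained, so that the integer value is genuinely confined to the three-element set rather than a five-element one. Everything else is a direct substitution, so this is a short argument.
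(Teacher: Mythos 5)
Your proof is correct and follows essentially the same route as the paper's: apply Lemma~\ref{lem:ell3} with $\alpha=1$, split each $d|\cdot|^2$ into its floor and fractional part, and observe that the resulting error term is strictly bounded in $(-2,2)$, forcing the integer quantity into the three-element window about $2d$. The only difference is notational ($\{\cdot\}$ versus the paper's $\ve_x,\ve_y,\ve_z$).
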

\begin{proof}
    Let $d|x|^2 = \floor{d|x|^2} + \ve_x$, where $\ve_x\in [0,1)$, and use similar notation for $y$ and $z$. Then applying Lemma~\ref{lem:ell3},
    $$2d=d(|x|^2-2|y|^2+|z|^2) = \floor{d|x|^2}-2\floor{d|y|^2}+\floor{d|z|^2} + \ve_x -2\ve_y+\ve_z\, .$$
    Since $|\ve_x -2\ve_y+\ve_z|  < 2$, the corollary holds.
\end{proof}

\begin{cor}\label{cor:ellN}
Suppose that $\{x_0, x_1,...,x_{N}\}$ forms a copy of $\alpha{\ell}_{N+1}$ with $N \geq 2$. Let $X_k= |x_k|^2 $. Then 
$$X_k=\alpha^2k^2+(X_1-X_0-\alpha^2)k+X_0$$ holds for all $0 \leq k \leq N$. 
\end{cor}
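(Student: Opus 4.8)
The plan is to exploit the fact that the second difference of the sequence $(X_k)_{0 \le k \le N}$ is constant, and then recognize the claimed formula as the unique quadratic interpolating the data. First I would observe that for each $1 \le k \le N-1$ the three points $x_{k-1}, x_k, x_{k+1}$ are collinear with both consecutive gaps equal to $\alpha$, hence form a copy of $\alpha\ell_3$ with $x_k$ as the midpoint. Applying Lemma~\ref{lem:ell3} to this triple gives $X_{k-1} - 2X_k + X_{k+1} = 2\alpha^2$ for every $1 \le k \le N-1$; in other words, the sequence $(X_k)$ has constant second difference $2\alpha^2$. (This is the only place the hypothesis $N \ge 2$ is used, so that at least one such interior triple exists.)

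Next I would check that the function $f(k) := \alpha^2 k^2 + (X_1 - X_0 - \alpha^2)k + X_0$ satisfies exactly the same properties: any quadratic with leading coefficient $\alpha^2$ has second difference $f(k+1) - 2f(k) + f(k-1) = 2\alpha^2$, and $f$ matches the initial data since $f(0) = X_0$ and $f(1) = \alpha^2 + (X_1 - X_0 - \alpha^2) + X_0 = X_1$. A routine induction on $k$ then finishes: assuming $X_{k-1} = f(k-1)$ and $X_k = f(k)$ for some $1 \le k \le N-1$, the recurrence $X_{k+1} = 2X_k - X_{k-1} + 2\alpha^2$ together with the identical recurrence for $f$ forces $X_{k+1} = f(k+1)$.

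I do not expect a genuine obstacle here — every step is elementary once one notices that consecutive triples of an $\alpha\ell_{N+1}$ are themselves copies of $\alpha\ell_3$. The only points requiring a modicum of care are bookkeeping ones: verifying the linear coefficient of $f$ is correctly normalized so that $f(1) = X_1$, and confirming that the induction range $1 \le k \le N-1$ exactly covers all the relations supplied by Lemma~\ref{lem:ell3}, so that the conclusion holds for all $0 \le k \le N$.
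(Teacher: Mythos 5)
Your proof is correct and follows the same route as the paper's: apply Lemma~\ref{lem:ell3} to each consecutive triple to obtain the second-difference recurrence $X_{k+1}=2X_k-X_{k-1}+2\alpha^2$, then solve it with the given initial data. The paper states "It follows that" where you spell out the verification of $f(0),f(1)$ and the induction, but there is no substantive difference.
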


\begin{proof}
By Lemma~\ref{lem:ell3}, we have $X_{k+2}=2X_{k+1}-X_k+2\alpha^2$ for each $0 \leq k \leq N-2$. By induction on $k$, it follows that 
$$X_k=\alpha^2k^2+(X_1-X_0-\alpha^2)k+X_0$$
holds for all $0 \leq k \leq N$. 
\end{proof}

In view of Corollary~\ref{cor:ellN}, the next proposition leads to a simple algorithm to verify that there is no monochromatic copy of $\ell_{N+1}$ in a given spherical coloring.

\begin{prop}\label{prop:checking}
Let $p,d$ be positive integers, and let $S$ be a subset of $\{0,1,\ldots, p-1\}$. Let $N$ be a positive integer. Then the following two statements are equivalent:
\begin{enumerate}
 \item For all real numbers $b$ and $c$, there is an integer $k$ such that $0 \leq k \leq N$ and $\lfloor d(k^2+bk+c)\rfloor \in S \pmod p$. 
\item For each positive integer $m \leq 2N+1$, and each integer $b_0$ and $c_0$ with $0 \leq b_0 \leq mp$ and $0 \leq c_0 \leq mp$, define
$$
K_i=\bigg\{0 \leq k \leq N: \bigg \lfloor dk^2+\frac{b_0}{m}k+\frac{c_0-i}{m}\bigg\rfloor  \in S \pmod p\bigg\}
$$
for $i \in \{0,1\}$. Then $K_0$ and $K_1$ are nonempty; moreover, the smallest element in $K_i$ is less or equal to the largest element in $K_{1-i}$ for $i \in \{0,1\}$. 
\end{enumerate}
\end{prop}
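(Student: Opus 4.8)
The plan is to prove the equivalence (1)$\iff$(2) by carefully unwinding what it means for the floor of the quadratic $q(k) = d(k^2+bk+c)$ to hit the set $S$ modulo $p$ as $k$ ranges over $\{0,1,\dots,N\}$. The key observation is that although $b$ and $c$ are arbitrary reals, the quantity $\lfloor dk^2 + dbk + dc \rfloor \bmod p$ for $0\le k\le N$ depends only on a bounded amount of information: namely, on the fractional structure of $db$ and $dc$ as seen through the finitely many integer thresholds that the affine function $k \mapsto dbk + dc$ can cross. First I would reduce statement (1) to a statement about all affine functions $k\mapsto \beta k + \gamma$ (absorbing $d$ into new real parameters $\beta = db$, $\gamma = dc$), asking that $\lfloor dk^2 + \beta k + \gamma\rfloor \in S \pmod p$ for some $0\le k\le N$; note the pure quadratic part $dk^2$ stays integer-coefficient-free only in the sense that $d$ is an integer, so $dk^2$ is already an integer and can be pulled out of the floor, giving $dk^2 + \lfloor \beta k + \gamma\rfloor$.

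Next I would argue that, as $(\beta,\gamma)$ ranges over $\R^2$, the vector $\big(\lfloor \beta\cdot 0 + \gamma\rfloor, \lfloor \beta\cdot 1 + \gamma\rfloor, \dots, \lfloor \beta N + \gamma\rfloor\big) \bmod p$ takes only finitely many values, and moreover these values can be parametrized by rational points: the relevant information is the ordering and spacing of the real numbers $\{\beta k + \gamma\}_{k=0}^N$ relative to the integer lattice modulo $p$, which is controlled by finitely many linear inequalities with integer coefficients of size $O(N)$. A compactness/density argument then shows that it suffices to test $(\beta,\gamma)$ at rational points with denominator $m$ for $1\le m\le 2N+1$ — the bound $2N+1$ coming from the fact that an affine function on $\{0,\dots,N\}$ is determined by $N+1$ values, but what matters is where it crosses integers, and one needs to capture up to $2N+1$ "events" (the crossings together with the endpoint constraints). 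Writing $\beta = b_0/m$ and $\gamma = (c_0 - i)/m$ with a perturbation parameter $i\in\{0,1\}$ handles the boundary case where $\beta k+\gamma$ lands exactly on an integer: perturbing $\gamma$ slightly down ($i=1$) versus not ($i=0$) captures both one-sided limits, and the requirement that the smallest element of $K_i$ be at most the largest element of $K_{1-i}$ is exactly the combinatorial shadow of the statement "some valid $k$ exists no matter which side of the threshold we approach from."

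For the forward direction (1)$\Rightarrow$(2), I would specialize: given $m, b_0, c_0$, take $b = b_0/(dm)$ and $c = (c_0-i)/(dm)$ (or the appropriate rescaling) and apply (1) to conclude $K_i \neq \emptyset$; the min/max interleaving condition follows because both $K_0$ and $K_1$ are nonempty and the "threshold-crossing" structure forces their extreme elements to interlace — if they did not, one could find a nearby real parameter for which (1) fails. For the converse (2)$\Rightarrow$(1), given arbitrary real $b,c$, I would approximate $(db, dc)$ by a rational pair with denominator $m\le 2N+1$ chosen so that the floor vector is preserved, or else lies "between" the $i=0$ and $i=1$ cases, and then the nonemptiness plus interleaving in (2) guarantees a valid $k$ for the actual real parameters. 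The hard part will be pinning down precisely why $2N+1$ is the correct denominator bound and verifying that the $i\in\{0,1\}$ perturbation genuinely captures all boundary behavior — this requires a clean lemma on piecewise-constant functions of the form $(\beta,\gamma)\mapsto \big(\lfloor \beta k+\gamma\rfloor \bmod p\big)_{k=0}^N$, identifying their regions of constancy as (relatively open) cells of a hyperplane arrangement whose defining hyperplanes have the form $\beta k + \gamma = j$ with $0\le k\le N$ and $j\in\Z$, and checking that every bounded cell, together with the limiting behavior on its boundary facets, is witnessed by the finite test family. Everything else is bookkeeping with floors and a routine density argument.
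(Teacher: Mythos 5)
Your proposal has the right outline — specialize to rationals for (1)$\Rightarrow$(2), approximate by bounded-denominator rationals for (2)$\Rightarrow$(1), and use the two shifts $i\in\{0,1\}$ to capture the one-sided boundary behavior — but the two steps you flag as ``the hard part'' are in fact the entire content of the proof, and your sketch of where $m\le 2N+1$ comes from is wrong. The bound is not a cell count for the hyperplane arrangement $\{\beta k + \gamma = j\}$ (that arrangement has on the order of $N^2$ cells in a fundamental domain, not $2N+1$); it comes from Dirichlet's approximation theorem with parameter $Q=2N+1$: pick $m\le 2N+1$ and $b_0$ with $|bd-\tfrac{b_0}{m}|<\tfrac{1}{m(2N+1)}$, then $c_0$ with $|cd-\tfrac{c_0}{m}|\le\tfrac{1}{2m}$. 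The point is quantitative: for $0\le k\le N$ the error $d(k^2+bk+c)-\big(dk^2+\tfrac{b_0}{m}k+\tfrac{c_0}{m}\big)$ is affine in $k$ with absolute value strictly below $\tfrac{1}{m}$, the granularity of the target, so $\lfloor d(k^2+bk+c)\rfloor$ agrees with the $i=0$ floor when that error is $\ge 0$ and with the $i=1$ floor otherwise. Because the error is affine it changes sign at most once, so the $k$'s using the $i=0$ floor form a prefix or a suffix of $\{0,\dots,N\}$; that monotone switch is exactly why the min/max interleaving in (2) is the right hypothesis, after a case split on the sign of $bd-\tfrac{b_0}{m}$. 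Your phrase ``chosen so that the floor vector is preserved'' glosses over precisely this: the floor vector is \emph{not} preserved, and the analysis of how it fails to be preserved is the proof.

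For (1)$\Rightarrow$(2), nonemptiness of $K_0,K_1$ is indeed immediate by specializing $b=\tfrac{b_0}{md}$, $c=\tfrac{c_0-i}{md}$, but you offer no mechanism for the interleaving beyond the claim that ``threshold-crossing structure forces their extreme elements to interlace.'' The actual argument, assuming $\ell=\min K_0 > r=\max K_1$, exhibits an explicit pair $b=\tfrac{b_0}{md}-\tfrac{1}{md(N+1)}$, $c=\tfrac{c_0}{md}+\tfrac{r}{md(N+1)}$ for which $\lfloor d(k^2+bk+c)\rfloor$ equals the $i=0$ floor for $k\le r$ and the $i=1$ floor for $k>r$; the assumption $\ell>r$ then rules out every $k$, contradicting (1). ``One could find a nearby real parameter for which (1) fails'' is a promissory note, not a proof; without the explicit perturbation there is no contradiction. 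So the proposal identifies the correct skeleton but leaves the two load-bearing steps unestablished, and the heuristic offered for the denominator bound would not have led to them.
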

\begin{proof}

We first show that (1) implies (2). Let $m \leq 2N+1$ be a positive integer, and let $b_0$ and $c_0$ be integers with $0 \leq b_0 \leq mp$ and $0 \leq c_0 \leq mp$. For each $i \in \{0,1\}$, applying (1) with $b=\frac{b_0}{md}$ and $c=\frac{c_0-i}{md}$, it follows that $K_0$ and $K_1$ are nonempty. Let $\ell$ be the smallest element in $K_0$, and let $r$ be the largest element in $K_1$. For the sake of contradiction, assume that $\ell>r$. Take 
$$b=\frac{b_0}{md}-\frac{1}{md(N+1)}, \quad \text{and }  \quad c=\frac{c_0}{md}+\frac{r}{md(N+1)}.$$
Note that for each $0 \leq k \leq N$, we have
$$
d(k^2+bk+c)-\bigg(dk^2+\frac{b_0}{m}k+\frac{c_0}{m}\bigg)=-\frac{k}{m(N+1)}+\frac{r}{m(N+1)}=\frac{r-k}{m(N+1)},
$$
which has absolute value less than $\frac{1}{m}$, while $dk^2+\frac{b_0}{m}k+\frac{c_0}{m}$ is a rational number with denominator $m$. It follows that 
\begin{equation*}
\lfloor d(k^2+bk+c) \rfloor=
\begin{cases}
\lfloor dk^2+\frac{b_0}{m}k+\frac{c_0}{m}\rfloor & \text{ if } 0 \leq k \leq r,\\ 
\lfloor dk^2+\frac{b_0}{m}k+\frac{c_0-1}{m}\rfloor  & \text{ if } r+1 \leq k \leq N.
\end{cases}
\end{equation*}
Thus the assumption $\ell>r$ implies that there is no integer $k$ with $0 \leq k \leq N$, such that $\lfloor d(k^2+bk+c)\rfloor \in S \pmod p$, violating (1). Thus, we must have $\ell \leq r$. Similarly, let $\ell'$ be the smallest element in $K_1$ and $r'$ the largest element in $K_{0}$; we can use 
$$b=\frac{b_0}{md}+\frac{1}{md(N+1)}, \quad \text{and }  \quad c=\frac{c_0}{md}-\frac{r'+1}{md(N+1)}$$ to conduct a similar argument and show that $\ell' \le r'$.

Next, we show that (2) implies (1).  Let $b$ and $c$ be real numbers. Since we are just interested in the value of $\lfloor d(k^2+bk+c)\rfloor$ modulo $p$, we can additionally assume that $0 \leq b< p/d$ and $0 \leq c <p/d$. We shall use rational numbers to approximate $db$ and $dc$. By Dirichlet's approximation theorem (see for example \cite[Theorem 1A]{S80}), we can find a positive integer $m \leq 2N+1$ and an integer $b_0$, such that $$\bigg|bd-\frac{b_0}{m}\bigg|<\frac{1}{m(2N+1)}.$$
Since $0 \leq b<p/d$, we have $0 \leq b_0 \leq mp$. Let $c_0$ be an integer such that 
$$\bigg|cd-\frac{c_0}{m}\bigg| \leq \frac{1}{2m}.$$
Since $0 \leq c <p/d$, we have $0 \leq c_0 \leq mp$.

For each $0 \leq k \leq N$, note that $dk^2+\frac{b_0}{m}k+\frac{c_0}{m}$ is a rational number with denominator $m$, and 
$$
d(k^2+bk+c)-\bigg(dk^2+\frac{b_0}{m}k+\frac{c_0}{m}\bigg)=\bigg(bd-\frac{b_0}{m}\bigg)k+\bigg(cd-\frac{c_0}{m}\bigg),
$$
which has absolute value at most $\frac{k}{m(2N+1)}+\frac{1}{2m}<\frac{1}{m}$. 
It follows that 
\begin{equation*}
\lfloor d(k^2+bk+c) \rfloor=
\begin{cases}
\lfloor dk^2+\frac{b_0}{m}k+\frac{c_0}{m}\rfloor & \text{ if } \big(bd-\frac{b_0}{m}\big)k+\big(cd-\frac{c_0}{m}\big) \geq 0,\\ 
\lfloor dk^2+\frac{b_0}{m}k+\frac{c_0-1}{m}\rfloor  & \text{ if } \big(bd-\frac{b_0}{m}\big)k+\big(cd-\frac{c_0}{m}\big)<0.
\end{cases}
\end{equation*}
Next assume that $bd \leq \frac{b_0}{m}$. Let $\ell$ be the smallest element in $K_0$, and let $r$ be the largest element in $K_1$. By the assumption, $\ell \leq r$. If $\bigg(bd-\frac{b_0}{m}\bigg)r+\bigg(cd-\frac{c_0}{m}\bigg) < 0$, then by the definition of the set $K_1$, we have
$$
\lfloor d(r^2+br+c) \rfloor=\lfloor dr^2+\frac{b_0}{m}r+\frac{c_0-1}{m}\rfloor \in S \pmod p
$$
and we are done. Otherwise, $r \geq \ell$ implies that
$$
0 \le \bigg(bd-\frac{b_0}{m}\bigg)r+\bigg(cd-\frac{c_0}{m}\bigg) \leq \bigg(bd-\frac{b_0}{m}\bigg)\ell+\bigg(cd-\frac{c_0}{m}\bigg)
$$
and thus
$$
\lfloor d(\ell^2+b\ell+c) \rfloor=\lfloor d\ell^2+\frac{b_0}{m}\ell+\frac{c_0}{m}\rfloor \in S \pmod p
$$
by the definition of the set $K_0$ and we are also done.

Finally, the assumption that the smallest element in $K_1$ is less or equal to the largest element in $K_0$ can be used to handle the case $bd >\frac{b_0}{m}$ in a similar way.
\end{proof}

\subsection{Outline of the proofs}

In general, we want to color $\E^n$ red and blue such that there are no red $\ell_i$ and blue $\ell_j$, where $i \in \{3,4,5\}$ and $j$ is as small as possible. To do so, we choose positive integers $p$ and $d$, and a subset $S \subset \{0,1,\dots,p-1\}$, and color the following points red
\begin{equation}
\label{eq:reds}
\mathcal{R}:=\{x\in\mathbb{E}^n\ : \lfloor d|x|^2 \rfloor \in S \pmod p\},
\end{equation}
and let the blue set be $\mathcal{B} := \E^n \setminus \mathcal{R}$. We can judiciously choose $S$ (generally, as large as possible) so that there are no red $\ell_i$; since $i$ is small, a quick computational search finds all valid $S$, and it is easy to verify by hand that there are no red $\ell_i$ for an individual choice of $S$. In particular, Corollary~\ref{cor:ell3} tells us how to check whether a red coloring via \eqref{eq:reds} will contain a red $\ell_3$. 
Since $j$ is generally larger, it is a non-trivial task to show that there are no blue $\ell_j$. However, we note that by Corollary \ref{cor:ellN}, for any $\ell_j$ given by points $\{x_0,\dots,x_{j-1}\}$ we have that  $d|x_k|^2 = d(k^2+bk+c)$ for some real-valued coefficients $b$ and $c$. If $j$ is sufficiently large, we use Proposition \ref{prop:checking} to verify that, indeed, some $x_k$ is colored red.
\begin{rem}    
When considering the optimality of this technique, it is interesting to consider the case when Proposition \ref{prop:checking} fails; that is, when there exist real numbers $b,c$ and a positive integer $N$ such that $\lfloor d(k^2 + bk + c) \rfloor \notin S \pmod p$ for all $0 \le k \le N$. We note that in this case there \emph{will} generally be a monochromatic blue $\ell_{N+1}$ in the above coloring. To see this, we choose points $x_0,x_1 \in \E^n$ such that $d|x_0|^2 \equiv dc$ and $d|x_1|^2 \equiv d+ db +dc$ in $(\R / p \Z)$, and also that $|x_0-x_1| = 1$. This is always possible if we choose $x_0,x_1$ sufficiently far from the origin. Let $d|x_0|^2=c'$ and $d|x_1|^2=d+b'+c'$, where $b'-db$ and $c'-dc$ are integer multiples of $p$. Let $x_k=k(x_1-x_0)+x_0$ for each $2 \leq k \leq N$, so that $\{x_0, \dots, x_{N}\}$ forms a copy of $\ell_{N+1}$. Note that, for each $0 \leq k \leq N$, we have $|x_k|^2 = k^2+\frac{b'}{d}k+\frac{c'}{d}$ by Corollary~\ref{cor:ellN}, and it follows from the assumption that $\lfloor d|x_k|^2 \rfloor \equiv \lfloor dk^2 + b'k + c' \rfloor \equiv \lfloor d(k^2 + bk + c) \rfloor \notin S \pmod p$. We then conclude that the points $x_0, \dots, x_{N}$ are colored blue. Thus, the smallest $N$ given to us by Proposition \ref{prop:checking} will be the optimal value for this coloring.
\end{rem}

\section{Proof of Theorem~\ref{thm:l3} and Theorem~\ref{thm:l3l3}}
In this section, we use certain choices of $p$, $d$, and $S$ to prove Theorem~\ref{thm:l3} and Theorem~\ref{thm:l3l3}. Checking our proof requires a simple computer program, which can be found at \cite{ResourcesProgression}; furthermore, this program is implemented using only integers to avoid floating point errors. 

\subsection{Proof of Theorem~\ref{thm:l3}} 

The choices that lead to Theorem~\ref{thm:l3} are the following. Note that any translate of a valid $S$ works just as well.

\begin{itemize}
    \item $\E^n \not \to (\ell_3, \ell_{20})$: Let $p=29$, $d=7$, and $S=\set{0,1,2,3,4,5,6}$.
    \item $\E^n \not \to (\ell_4, \ell_{14})$: Let $p=29$, $d=10$, and $S=\{0,1,2,3,4,5,6,7,8\}$.
    \item $\E^n \not \to (\ell_5, \ell_{8})$: Let $p=5$, $d=2$, and $S=\set{0,1}$.
\end{itemize}

We will consider the $(\ell_3, \ell_{20})$ result from above to explain what our algorithm does specifically. Recall the definition of our coloring, whose red points are given in equation \eqref{eq:reds}. First, we have to check that at least one point in every $\ell_{20}$ is red. By Corollary \ref{cor:ellN}, it suffices to check that for every real numbers $b$ and $c\in [0,29)$, there is a $k\in \set{0,...\, ,19}$ such that $\lfloor 7(k^2+bk+c)\rfloor \in \set{0,...\, , 6} \pmod {29}$. If we let $K_0,K_1$ be defined as in Proposition~\ref{prop:checking}, then to finish the claim, we need only check that each $K_i$ is non-empty, and that $\max\{K_0\} \geq \min\{K_1\}$ and $\max\{K_1\} \geq \min\{K_0\}$. Computing $K_i$ involves only checking a relatively small number of integer inputs to polynomials with rational coefficients that have denominator at most $39$. Thus, the computation is finite and can be easily checked on a regular computer. 

Next, we need to be sure that no triple $x_1$, $x_2$, $x_3$ forming an $\ell_3$ is entirely red; this would be the case if $\floor{7|x_i|^2}\in S$ for $i=0,1,2$. To check this, we write the complement set $S'=\{0,1,\ldots, p-1\}\setminus S$, and run our algorithm on the set $S'$ with the same $p$ and $d$ values in exactly same manner as in the previous paragraph.

The proofs of $\E^n \not \to (\ell_4, \ell_{14})$ and $\E^n \not \to (\ell_5, \ell_{8})$ are almost identical. The only notable changes are to $p,d$ and $S$.

\begin{rem}
We have run a reasonably exhaustive search on different $p,d$, and $S$ that might be useful for these purposes. Thus, it is somewhat curious that in each case, we settled on sets of consecutive integers. We do not have a good explanation for this at the moment. However, for the interested reader, we record below the $p,d$, and $S$ that we checked for these purposes.
\begin{itemize}
    \item $i=3$: we checked all values $3\leq p\leq 45$, $1\leq d \leq \frac{p}{2}$, and all sets $S$ where $|S| \leq 10$. 
    \item $i=4$: we checked all values $3\leq p \leq 35$, $1\leq d \leq \frac{p}{2}$, and all sets $S$ where $|S| \leq 10$. 
    \item $i=5$: we checked all values $3\leq p \leq 29$, $1\leq d \leq \frac{p}{2}$, and all sets $S$ where $|S| \leq 10$.
\end{itemize}
\end{rem}

\begin{rem}
This framework can recover the result $\E^n \not \to (\ell_6,\ell_6)$ of Erd{\H{o}}s et. al. \cite{E73} as well; simply set $p=12, d=1$, and $S=\{0,1,2,3,4,5\}$ \cite{E73}. 
\end{rem}

\subsection{Proof of Theorem~\ref{thm:l3l3}}
We use a similar strategy to prove Theorem~\ref{thm:l3l3}. We now have 3 colors, say red, green, and blue. We shall choose positive integers $p$ and $d$, and two disjoint subsets $S$ and $T$ of $\{0,1,\ldots, p-1\}$. We define the red point set $\mathcal{R}$ and the green point set $\mathcal{G}$ as follows:
$$
\mathcal{R}:=\{x\in\mathbb{E}^n\ : \lfloor d|x|^2 \rfloor \in S \pmod p\}, \quad \mathcal{G}:=\{x\in\mathbb{E}^n\ : \lfloor d|x|^2 \rfloor \in T \pmod p\}.
$$
Then the blue point set $\mathcal{B}$ is
$$
\mathcal{B}:=\{x\in\mathbb{E}^n\ : \lfloor d|x|^2 \rfloor \not \in S \cup T \pmod p\},
$$
and we can apply Proposition~\ref{prop:checking}, as in the proof of Theorem~\ref{thm:l3} but replacing $S$ with $S \cup T$, to check that there is no blue progression.

\begin{enumerate}
    \item $\E^n \not \to (\ell_3, \ell_3, \ell_{8})$: Let $p=10$, $d=2$, $S = \set{0, 1}$, and $T=\set{5, 6}$.
    \item $\E^n \not \to (\ell_3, \ell_4, \ell_{7})$: Let $p=10$, $d=2$, $S = \set{0, 1}$, and $T=\set{4, 5, 6}$.
    \item $\E^n \not \to (\ell_3, \ell_5, \ell_5)$: Let $p=8$, $d=1$, $S=\{0,4\}$, and $T=\{5, 6, 7\}$.  
\end{enumerate}

\begin{rem}
By setting $p=3,d=2, S=\{0\},$ and $T=\{1\}$, we recover the $(\ell_4, \ell_4, \ell_4)$ result by Erd{\H{o}}s et. al. \cite{E73}. 
\end{rem}

\subsection{Other configurations}\label{sec:other}
Conlon and Wu \cite[Conjecture 4.2]{CW23} conjectured that for every non-spherical set $X$, there exists a natural number $m$ such that $\E^n \not \to (X,\ell_m)$ for all $n$. Our techniques can be modified to prove this conjecture for some sets other than arithmetic progressions, such as regular polygons with their centers, or parallelograms. We illustrate the latter example here. Note that this conjecture was proven recently (after the release of the present manuscript) by Conlon and F\" uhrer in \cite{CF24}; however, the present results are still of interest because they provide strong bounds on the value of $m$, and avoid larger classes of configurations in red.
\begin{lem}
    \label{lem:parallelogram}
    Let $x_1,x_2, x_3,x_4\in \E^n$ form a parallelogram, with diagonal lengths $|x_1-x_2|=\ \alpha$ and $|x_3-x_4|=\ \beta$. Then we have
    \begin{equation}
    \label{eq:parallelogram}
        |x_1|^2+|x_2|^2-|x_3|^2-|x_4|^2 = \frac12 (\alpha^2-\beta^2).
    \end{equation}
\end{lem}
\begin{proof}
Consider Figure~\ref{fig:parallelogram} below. Let $X_0=\frac12|x_1+x_2|$ be the norm of the centroid.
 
\begin{figure}[H]
    \centering
    \includegraphics[scale=0.55]{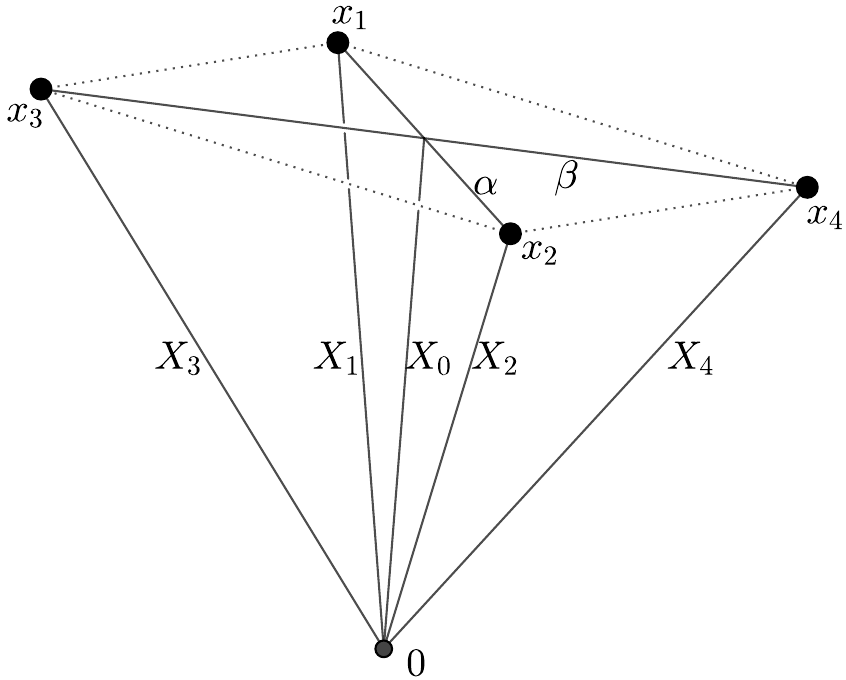}
    \caption{Four points forming a parallelogram}
    \label{fig:parallelogram}
\end{figure}
We know $|x_1-x_2|=\alpha$, $|x_3-x_4|=\beta$, and $|x_1+x_2|=|x_3+x_4|=2X_0$. This gives us two equations,
\begin{align*}
    2(|x_1|^2+|x_2|^2)=|x_1+x_2|^2+|x_1-x_2|^2&=4X_0^2+\alpha^2\, ,
    \\
    2(|x_3|^2+|x_4|^2)=|x_3+x_4|^2+|x_3-x_4|^2&=4X_0^2+\beta^2\, .
\end{align*}
Subtracting these equations yields equation \eqref{eq:parallelogram}.
\end{proof}
We next have a corollary similar to Corollary~\ref{cor:ell3} when $\frac12 (\alpha^2-\beta^2)$ is a positive integer.
\begin{cor}
\label{cor:parallelogram}
Let $x_1,x_2, x_3,x_4\in \E^n$ form a parallelogram, with diagonal lengths $\alpha$ and $\beta$ such that $\gamma:=\frac12 (\alpha^2-\beta^2)$ is a positive integer. Then we have
    \begin{equation*}
        \floor{d|x_1|^2}+\floor{d|x_2|^2}-\floor{d|x_3|^2}-\floor{d|x_4|^2} \in \set{d\gamma-1,d\gamma,d\gamma+1}.
    \end{equation*}
\end{cor}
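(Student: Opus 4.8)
The plan is to mirror exactly the structure of Corollary~\ref{cor:ell3}, replacing the $\ell_3$ identity of Lemma~\ref{lem:ell3} with the parallelogram identity \eqref{eq:parallelogram}. First I would write each $d|x_i|^2 = \floor{d|x_i|^2} + \ve_i$ with $\ve_i \in [0,1)$ for $i = 1,2,3,4$. Multiplying \eqref{eq:parallelogram} by $d$ and using the hypothesis that $\gamma = \tfrac12(\alpha^2 - \beta^2)$ is a positive integer gives
\begin{equation*}
d\gamma = d\big(|x_1|^2 + |x_2|^2 - |x_3|^2 - |x_4|^2\big) = \floor{d|x_1|^2} + \floor{d|x_2|^2} - \floor{d|x_3|^2} - \floor{d|x_4|^2} + \ve_1 + \ve_2 - \ve_3 - \ve_4.
\end{equation*}
Rearranging, the claimed integer combination equals $d\gamma - (\ve_1 + \ve_2 - \ve_3 - \ve_4)$.

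The remaining step is the bound $|\ve_1 + \ve_2 - \ve_3 - \ve_4| < 2$: since each $\ve_i \in [0,1)$, we have $\ve_1 + \ve_2 < 2$ and $-\ve_3 - \ve_4 \le 0$, so the sum is strictly less than $2$ and strictly greater than $-2$; hence it lies in $(-2,2)$. Since $\floor{d|x_1|^2} + \floor{d|x_2|^2} - \floor{d|x_3|^2} - \floor{d|x_4|^2}$ is an integer differing from the integer $d\gamma$ by something of absolute value less than $2$, it must lie in $\{d\gamma - 1, d\gamma, d\gamma + 1\}$, which is the assertion.

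There is essentially no obstacle here — the argument is a verbatim adaptation of the proof of Corollary~\ref{cor:ell3}, with the only (trivial) new ingredient being that $\ve_1 + \ve_2 - \ve_3 - \ve_4$, having two positive and two negative terms each bounded in absolute value by $1$, still stays within $(-2,2)$. The one point worth stating carefully is that $d\gamma$ must be an integer for the conclusion to make sense as written, which is guaranteed once $\gamma$ is assumed to be a positive integer and $d$ a positive integer. This corollary then plays, for parallelograms, the role that Corollary~\ref{cor:ell3} plays for $\ell_3$: it reduces checking the absence of a monochromatic red parallelogram (with the prescribed diagonal-length relation) in the spherical coloring \eqref{eq:reds} to a finite check on the set $S$, namely that no $s_1, s_2, s_3, s_4 \in S$ satisfy $s_1 + s_2 - s_3 - s_4 \in \{d\gamma - 1, d\gamma, d\gamma + 1\}$.
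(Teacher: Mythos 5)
Your proposal is correct and is exactly the argument the paper intends: the Corollary is stated without an explicit proof precisely because it is a verbatim adaptation of the proof of Corollary~\ref{cor:ell3}, with the parallelogram identity \eqref{eq:parallelogram} in place of Lemma~\ref{lem:ell3} and the error term $\ve_1+\ve_2-\ve_3-\ve_4$ (two positive, two negative contributions, each in $[0,1)$) still confined to $(-2,2)$. No gap; both the approach and the bookkeeping match what the paper leaves implicit.
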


We can now use this in conjunction with Proposition~\ref{prop:checking} to search for colorings that avoid parallelograms in red and progressions in blue. For a particular case, we consider $\gamma=2$. Denote by $\mathcal{P}_2$ the collection of parallelograms with diagonals satisfying $\frac12 (\alpha^2-\beta^2)=2$. This is not a congruence class of one point set, rather, it is the congruence classes from \emph{a $2$-parameter family} of parallelograms, and in particular, $\ell_3\in \mathcal{P}_2$ with $\alpha=2$ and $\beta=0$. We can create a coloring that avoids this \emph{entire family} in red. Indeed, using one of the same colorings from the proof of Theorem \ref{thm:l3}, we show that $\E^n\not\to (\mathcal{P}_2, \ell_{20})$ with exactly the same bound we had for $\ell_3$ alone.

Similarly, we use $\mathcal{P}_{\gamma}$ to denote the collection of parallelograms with diagonals satisfying $\frac12 (\alpha^2-\beta^2)=\gamma$. Note also that $\ell_4 \in \mathcal{P}_4$, which we can avoid for only a slightly worse bound than we had in Theorem~\ref{thm:l3}. We report results for $\gamma \in \{1,2,3,4\}$ below:
\begin{itemize}
    \item $\E^n\not\to (\mathcal{P}_1, \ell_{18})$:  $p=31,d=8, S=\{0, 1, 2, 3, 16, 17, 18\}$.
    \item $\E^n\not\to (\mathcal{P}_2, \ell_{20})$:  
$p=29,d=7, S=\{0, 1, 2, 3, 4, 5, 6\}$.
\item $\E^n\not\to (\mathcal{P}_3, \ell_{19})$: $p=25,d=4, S=\{0, 1, 2, 3, 4, 5\}$.
\item $\E^n\not\to (\mathcal{P}_4, \ell_{21})$: $p=17$, $d=2$, $S=\set{0,1,2,3 }$
\end{itemize}

\begin{rem}
More generally, suppose we have a family of sets whose members satisfy a linear relation in their points' square norms with rational coefficients. Additionally, assume that this linear relationship is non-homogeneous, and the sum of the coefficients of the non-constant terms should not add up to the constant term (otherwise, the configuration is spherical). Lemmas \ref{lem:ell3} and \ref{lem:parallelogram} are examples of such relations, and any such family has an analogue of Corollaries \ref{cor:ell3} and \ref{cor:parallelogram}. Therefore our methods can be extended to those families as well. It seems plausible, even, that this could be done in the case when the coefficients are non-rational, following a coloring of the type used in \cite{CF24}. However, this would seem to require a new version of Proposition \ref{prop:checking}, and it is not clear that the result would be computationally tractable.
\end{rem}

\section{Proof of Theorem~\ref{thm:alpha}}

In this section, we will prove $\E^n \not \to (\ell_3, \alpha\ell_{6889})$ for any real number $\alpha > 0$. Our proof is based on a refinement of the ideas of F\"uhrer and T\'oth \cite{FT24}, together with a few new ingredients. Some of the arguments presented below have appeared in \cite{FT24}, but we present a self-contained proof for the sake of completeness.

Throughout the section, $p$ is used to denote an odd prime. Furthermore, we use $\F_p$ to denote the finite field with $p$ elements and $\F_p^*=\F_p \setminus \{0\}$. For convenience, sometimes we identity $\F_p$ with the set $\{0,1,\ldots, p-1\}$ without explicitly saying so. In the following constructions, we will use squares in $\F_p$ frequently. We write $R_p=\{x^2: x \in \F_p\}$ and $R_p^*=\{x^2: x \in \F_p^*\}=R_p \setminus \{0\}$. For a real number $\beta$ and a set $I$, we follow the standard notation $\beta \in I \pmod p$, meaning that there is an integer $m$ such that $\beta \in I+pm=\{t+pm: t \in I\}$.

As in previous sections, we will need some tools to show how our coloring avoids short red progressions and long blue progressions. However, instead of trying to avoid $\ell_3$ and $\alpha\ell_{6889}$, we will try to avoid $\alpha_{red} \ell_3$ and $\alpha_{blue} \ell_{6889}$, where $\alpha = \alpha_{blue}/\alpha_{red}$ and $\alpha_{blue},\alpha_{red}$ are judiciously chosen, and finish the proof by scaling. Similar to the framework described in Section~\ref{sec:framework}, we will specify a prime $p$ and a set $S \subset \F_p$, and color a point $x \in \E^n$ red if and only if  $\lfloor |x|^2 \rfloor \in S$ modulo $p$. 

Our first lemma concerns some specific choices of $p$ and $S$, and will have two parts. The first says that all shifts of the squares (and non-squares) modulo $p$ will intersect $S$, and thus contain red points. The second will show that as long as $\alpha_{red}^2 \in [1,1.5]$ modulo $p$, then under this coloring we are avoiding red $\alpha_{red}\ell_3$. This can be seen as an approximate version of Corollary \ref{cor:ell3}.

\begin{lem}\label{lem:red}
For each of the following prime $p$ and subset $S \subset \mathbb{F}_p$, 
\begin{itemize}
    \item $p=47, S=\{0,5,10,15,20\}$
    \item $p=59, S=\{0, 5, 10, 15, 20, 25\}$
    \item $p=67, S=\{0, 5, 10, 15, 20, 25, 30\}$
    \item $p=71, S=\{0, 5, 10, 15, 20, 25, 30\}$
    \item $p=73, S=\{0, 5, 10, 15, 20, 25, 30\}$
 \item $p=79, S=\{0, 5, 10, 15, 20, 25, 30\}$
 \item $p=83, S=\{0, 5, 10, 15, 20, 25, 30,35\}$
\end{itemize}

we have:
\begin{enumerate}
    \item For each $b \in \F_p^*$ and $c \in \F_p$, we have
$$
(bR_p+c) \cap S\neq \emptyset.
$$
\item If $\alpha^2 \in [1,1.5] \pmod p$, then the red-blue coloring of $\E^n$ with red points $$\mathcal{R}:=\{x\in\mathbb{E}^n\ : \lfloor |x|^2 \rfloor \in S \pmod p\}$$ does not contain a red copy of $\alpha{\ell}_3$.
\end{enumerate} 

\end{lem}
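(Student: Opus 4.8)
\textbf{Proof plan for Lemma~\ref{lem:red}.}

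Both parts reduce to finite verifications, so the plan is to set up the right finite checks and argue that nothing is lost by restricting to them. For part (1), the statement ``$(b\F_p^2 + c) \cap S \neq \emptyset$ for every $b \in \F_p^*$ and $c \in \F_p$'' is already a finite assertion: since $\F_p^2$ is a fixed subset of $\F_p$, one just loops over the $p-1$ choices of $b$ and the $p$ choices of $c$, forms the translated dilate $b\F_p^2 + c$, and checks it meets $S$. I would note the mild simplification that $b\F_p^2$ only depends on whether $b$ is a square or not (there are just two dilates of $\F_p^2$, namely $\F_p^2$ itself and $\eta \F_p^2$ for a fixed nonsquare $\eta$, the latter being the nonsquares together with $0$), so in fact only $2p$ set-intersections need to be examined. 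This is entirely routine and can be delegated to the computer code at \cite{Resources}; the writeup should simply state that the check passes for each listed pair $(p,S)$.

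For part (2), I would argue by contradiction in the spirit of Corollary~\ref{cor:ell3}. Suppose $x,y,z \in \E^n$ form a copy of $\alpha \ell_3$ all colored red, so $\lfloor |x|^2 \rfloor, \lfloor |y|^2 \rfloor, \lfloor |z|^2 \rfloor \in S \pmod p$. By Lemma~\ref{lem:ell3}, $|x|^2 - 2|y|^2 + |z|^2 = 2\alpha^2$. Writing $|x|^2 = \lfloor |x|^2 \rfloor + \ve_x$ with $\ve_x \in [0,1)$ and similarly for $y,z$, we get
\begin{equation*}
\lfloor |x|^2 \rfloor - 2\lfloor |y|^2 \rfloor + \lfloor |z|^2 \rfloor = 2\alpha^2 - (\ve_x - 2\ve_y + \ve_z),
\end{equation*}
and since $\ve_x - 2\ve_y + \ve_z \in (-2,2)$ while $2\alpha^2 \in [2,3] \pmod p$ by hypothesis, the left-hand side lies in the interval $(2\alpha^2 - 2, 2\alpha^2 + 2) \subset (0,5) \pmod p$; that is, it is congruent mod $p$ to one of $0,1,2,3,4$ (or possibly a value read through the $[1,1.5]$ window more carefully — one should track that $2\alpha^2$ ranges over $[2,3]$, so the combination lies strictly between $0$ and $5$). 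Hence to rule out a red $\alpha\ell_3$ it suffices to check that there are no $s_1, s_2, s_3 \in S$ with $s_1 - 2s_2 + s_3 \equiv t \pmod p$ for any $t \in \{0,1,2,3,4\}$. Because each listed $S$ is an arithmetic progression with common difference $5$, the quantity $s_1 - 2s_2 + s_3$ is always divisible by $5$, and one checks that it never falls in the residue window $\{0,1,2,3,4\} \pmod p$ for the relevant small range — this is the finite computation, and again it is short enough to verify by hand or defer to \cite{Resources}.

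The only genuine subtlety — and the step I would be most careful about — is the bookkeeping of the interval arithmetic modulo $p$ in part (2): one must pin down exactly which residues $s_1 - 2s_2 + s_3$ can take, taking into account that $2\alpha^2$ is only constrained to an \emph{interval} $[2,3]$ modulo $p$ rather than a single value, and that the error term $\ve_x - 2\ve_y + \ve_z$ fills the open interval $(-2,2)$. Getting the endpoints right (open versus closed, and whether the span is $(0,5)$ or something slightly different) is what determines the precise forbidden set of values of $s_1 - 2s_2 + s_3$, and hence which $(p,S)$ survive; everything else is a transparent finite search. I would therefore write out the interval bound explicitly, state the resulting forbidden residues, and then record that the listed pairs pass both checks.
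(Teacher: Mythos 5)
Your approach to both parts matches the paper's. For part (1) the reduction to two orbits (squares and non-squares of $\F_p$) and a computer check is exactly what the paper does. For part (2) you follow the same contradiction argument through Lemma~\ref{lem:ell3} and the same interval bookkeeping, arriving at the correct inclusion $\lfloor |x|^2\rfloor - 2\lfloor |y|^2\rfloor + \lfloor |z|^2\rfloor \in (0,5) \pmod p$.

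There is, however, an internal inconsistency you need to fix. After correctly noting in your parenthetical that the integer combination ``lies strictly between $0$ and $5$'' (so it is congruent to one of $\{1,2,3,4\}$, not $\{0,1,2,3,4\}$), your proposed finite check is that no $s_1,s_2,s_3 \in S$ satisfy $s_1 - 2s_2 + s_3 \equiv t \pmod p$ for $t \in \{0,1,2,3,4\}$. That check fails immediately: taking $s_1=s_2=s_3=0$ gives $0$, which is in your forbidden set. The correct forbidden set is $\{1,2,3,4\}$; the value $0$ must be allowed. Concretely, since every $s\in S$ is a multiple of $5$, the combination $s_1 - 2s_2 + s_3$ is always $5k$ for some integer $k$, and the quantitative fact you need (made explicit in the paper but absent from your ``relevant small range'' phrasing) is that $|5k| \leq p-7$ for every listed pair $(p,S)$, so no such multiple of $5$ can be $\equiv 1,2,3,4 \pmod p$ after wrap-around. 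You should state this bound explicitly rather than leave it to an unspecified smallness. With the target set corrected to $\{1,2,3,4\}$ and the bound $|5k|\leq p-7$ recorded, your argument agrees with the paper's.
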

\begin{proof}
(1) Easy to verify using computers. The code can be found at \cite{ResourcesProgression} as well. Note that it suffices to verify the statement for $b=1$ and a fixed quadratic non-residue modulo $p$. 

(2) Suppose otherwise that $x, y, z$ form a red copy of $\alpha{\ell}_3$, then Lemma~\ref{lem:ell3} implies that $|x|^2-2|y|^2+|z|^2=2\alpha^2 \in [2,3] \pmod p$. Note that $|x|^2+|z|^2-2<\lfloor |x|^2\rfloor+\lfloor |z|^2 \rfloor \leq |x|^2+|z|^2$ and $-2|y|^2\leq -2\lfloor |y|^2\rfloor<-2|y|^2+2$. It follows that $\lfloor |x|^2\rfloor-2\lfloor |y|^2\rfloor+\lfloor |z|^2 \rfloor \in (0,5) \pmod p$, or equivalently,
$$\lfloor |x|^2\rfloor-2\lfloor |y|^2\rfloor+\lfloor |z|^2 \rfloor\in \{1,2,3,4\} \pmod p.$$
However, since $x,y,z \in \mathcal{R}$, based on the property of the set $S$, we have 
$$
\lfloor |x|^2\rfloor-2\lfloor |y|^2\rfloor+\lfloor |z|^2 \rfloor \equiv 5k \pmod p,
$$
where $k$ is an integer such that $|5k|\leq p-7$, a contradiction.
\end{proof}

\begin{rem}\label{rem:59}
When $p=59$ and $S=\{0, 5, 10, 15, 20, 25\}$, the stronger statement $(bR_p^*+c) \cap S \neq \emptyset$ holds for each $b \in \F_p^*$ and $c \in \F_p$. This observation will be needed in later discussions.
\end{rem}

Next, we will show how to avoid long $\alpha_{blue}\ell_M$, where $\alpha_{blue}^2$ is close to an integer. If $\{x_0,\dots,x_{M-1}\}$ is such a progression, the proof proceeds by showing that $\{\lfloor |x_i|^2 \rfloor: 0\leq i \leq M-1\}$ will always contain a shift of the squares or non-squares, if $M$ is sufficiently large. Combined with the previous lemma, we see that $\alpha_{blue}\ell_M$ will always contain a point colored red.

\begin{lem}\label{lem:blue}
Let $p$ be an odd prime. Let $\alpha$ be a real number such that $\alpha^2=b+\epsilon$ with $b$ an integer not divisible by $p$ and $0\leq \epsilon \leq (4p^5)^{-1}$. Suppose that 
$\{x_0, x_1,...,x_{M-1}\}$ forms a copy of $\alpha{\ell}_{M}$, with $X_k=|x_k|^2$. Assume that $M=p^2$ if $\epsilon=0$, and assume that $M=2p^2-2p+1$ if $\epsilon>0$. If $\epsilon=0$, then there exists $c\in \F_p$, such that 
$$bR_p+c\subseteq \{\lfloor X_k \rfloor: 0 \leq k \leq M-1 \}$$ 
as two subsets of $\F_p$. If $\epsilon>0$, then there exists $c\in \F_p$, such that 
$$bR_p^*+c\subseteq \{\lfloor X_k \rfloor: 0 \leq k \leq M-1 \}$$ 
 as two subsets of $\F_p$. 
\end{lem}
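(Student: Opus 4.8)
The plan is to reduce the statement to a counting problem about the image of a quadratic polynomial modulo $p$, and then invoke the algebraic fact that the full set of quadratic residues (resp. nonzero residues) is always captured. Recall from Corollary~\ref{cor:ellN} that for a copy of $\alpha\ell_M$ with $X_k = |x_k|^2$ one has $X_k = \alpha^2 k^2 + (X_1 - X_0 - \alpha^2)k + X_0$, so writing $\alpha^2 = b + \epsilon$ gives $X_k = bk^2 + \beta k + \gamma + \epsilon k^2$ for suitable real $\beta, \gamma$. When $\epsilon = 0$ this is exactly a quadratic in $k$ with the leading coefficient equal to the integer $b$, and $\lfloor X_k \rfloor \equiv b k^2 + \lfloor \beta k + \gamma \rfloor + (\text{correction}) \pmod p$; the key point is that modulo $p$ we need $\lfloor X_k\rfloor$ to run over all of $b\F_p^2 + c$ for some $c$. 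First I would handle the clean case $\epsilon = 0$: here $X_k$ itself need not be an integer, but $b k^2$ contributes the quadratic-residue part, and I want to show that as $k$ ranges over a complete residue system mod $p$ (which is guaranteed since $M = p^2 \ge p$), the values $\lfloor X_k \rfloor \bmod p$ cover a shift of $b\F_p^2$. The subtlety is that $\lfloor \cdot \rfloor$ interacts with the linear term $\beta k + \gamma$; the trick (this is where the $M = p^2$ rather than $M = p$ comes in) is that among $k, k+p, k+2p, \dots, k+(p-1)p$, all congruent mod $p$, the fractional part of $\beta k + \gamma$ shifts, so for at least one such representative the floor lands in the ``right'' residue class, namely $bk^2 + c \pmod p$ for a fixed $c$ determined by a convenient representative.

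Next I would treat the case $\epsilon > 0$, which is the main obstacle and the reason the bound jumps to $M = 2p^2 - 2p + 1$. Now $X_k = bk^2 + \beta k + \gamma + \epsilon k^2$, and the extra term $\epsilon k^2$ is not negligible — but the hypothesis $0 \le \epsilon \le (4p^5)^{-1}$ makes it tiny: over the range $0 \le k \le M - 1 < 2p^2$, we have $\epsilon k^2 < \epsilon \cdot 4p^4 \le (4p^5)^{-1}\cdot 4p^4 = 1/p$. So $\epsilon k^2$ can shift the floor by at most $1$ over the whole range, and in fact it increases monotonically. The argument is then: partition $\{0, 1, \dots, M-1\}$ into residue classes mod $p$; within each class the representatives are $p$-separated; the rational/real part $bk^2 + \beta k + \gamma$ is, modulo $p$, handled exactly as before, and over the roughly $2p$ representatives in a class (there are $\lceil M/p\rceil = 2p-1$ of them when $M = 2p^2-2p+1$) the perturbation $\epsilon k^2$ drifting by at most $1/p$ still leaves enough room to hit the target residue. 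The reason we lose the value $0 \in \F_p^2$ — i.e. we only get $b(\F_p^*)^2 + c$ — is that $k \equiv 0 \pmod p$ corresponds precisely to the ``would-be'' residue $c$ itself, and the floor/perturbation analysis cannot be controlled there in the same way (the quadratic term vanishes mod $p$ and one cannot guarantee the linear shift lands correctly). So I would explicitly exclude $k \equiv 0$ and show the remaining $p-1$ residue classes, each represented $2p-1$ times, suffice to realize $b(\F_p^*)^2 + c$.

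The key steps, in order, are: (i) apply Corollary~\ref{cor:ellN} to write $X_k$ as a quadratic in $k$ with leading term $b + \epsilon$; (ii) bound the total perturbation $\epsilon k^2$ over the relevant range of $k$ by $1/p$ using the hypothesis on $\epsilon$; (iii) for each residue $j \bmod p$, analyze the sequence $\lfloor X_{j}\rfloor, \lfloor X_{j+p}\rfloor, \dots$ and show that one of these lies in the single residue class $bj^2 + c \pmod p$ for a fixed $c$ (independent of $j$) — this uses equidistribution of $\{\beta k + \gamma\}$ along an arithmetic progression of common difference $p$, which requires enough terms, hence the factor $p$ (for $\epsilon=0$) or $2p-1$ (for $\epsilon > 0$) in $M/p$; (iv) as $j$ ranges over $\F_p$ (resp. $\F_p^*$), $bj^2 + c$ ranges over $b\F_p^2 + c$ (resp. $b(\F_p^*)^2 + c$), giving the desired inclusion. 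I expect step (iii) to be the technical crux: one must be careful that the ``fixed $c$'' really is the same for all residue classes $j$, which amounts to checking that the floor of the linear part $\beta k + \gamma$ contributes a predictable residue once we are free to choose among $p$ (or $2p-1$) lifts of $k$; a clean way is to observe $\beta p$ and $\gamma$ need not be integral, so $\{\beta(j + tp) + \gamma\}_{t}$ sweeps an interval of length $\ge 1$ as $t$ ranges over $0, \dots, p-1$, guaranteeing a lift whose floor has any prescribed value mod $p$ — and then the $\epsilon$-perturbation, being $< 1/p < 1$, only forces us to have one extra lift in reserve, explaining the doubling.
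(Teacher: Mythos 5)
Your step (iii) is the crux, and that is precisely where the proposal breaks. You claim that as $t$ ranges over $\{0,\dots,p-1\}$, the values $\beta(j+tp)+\gamma$ ``sweep an interval of length $\geq 1$,'' so that one lift achieves any prescribed floor residue mod $p$. Two things are wrong here. First, the raw range of $\beta(j+tp)+\gamma$ over those $p$ lifts has length $|\beta|\,p(p-1)$, which can be arbitrarily small (nothing in the setup prevents $\beta=X_1-X_0-\alpha^2$ from being very close to $0$ or to an integer), so the interval need not have length $\geq 1$. Second, even if it did, an interval of length $1$ gives the floor only two possible values, nowhere near the $p$ residues you would need to guarantee hitting a prescribed class mod $p$. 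And if you instead mean that the \emph{fractional} parts of $\beta(j+tp)+\gamma$ sweep $[0,1)$, that is an equidistribution statement that is false for a fixed $\beta$ and only $p$ terms, and in any case it gives no control over the integer part mod $p$, which is what the argument needs. So the fixed $c$ independent of $j$ does not emerge from this mechanism. (Your heuristic for why $0\in\F_p^2$ is lost in the $\epsilon>0$ case is also not the real reason, but that is secondary.)

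The paper avoids this obstruction with a step your outline omits entirely: Dirichlet's approximation theorem. One first finds a denominator $d\leq p-1$ and integer $c$ with $|d\beta-c|\leq 1/p$, then chooses $r<p$ with $2drb\equiv -c\pmod p$, and examines the single arithmetic progression $k=dj+r$ (step $d$, not step $p$). Along this progression, $Y_j=X_{dj+r}$ decomposes as $bd^2 j^2$ plus a term $(2drb+c)j\equiv 0\pmod p$ plus a slowly-drifting remainder $Z_j$ (linear if $\epsilon=0$, quadratic if $\epsilon>0$) whose total variation over $p+1$ (resp.\ $2p$) consecutive indices is $\leq 1$. Monotonicity then guarantees a half-window of length $(p+1)/2$ (resp.\ $(p-1)/2$) on which $\lfloor Z_j\rfloor$ is constant, and $\{bd^2 j^2\}$ over that half-window already covers $b\F_p^2$ (resp.\ $b(\F_p^*)^2$). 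The choice of $d$ is what forces the linear contribution to behave like an integer multiple of $p$ plus a controlled drift; without that, the residue analysis you propose cannot be made to close.
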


\begin{proof}
By Corollary~\ref{cor:ellN}, for each $0 \leq k \leq M-1$, we have
$X_k=\alpha^2k^2+(X_1-X_0-\alpha^2)k+X_0$. Let $\beta=X_1-X_0-\alpha^2$. By Dirichlet's approximation theorem (see for example \cite[Theorem 1A]{S80}), we can find a positive integer $d \leq p-1$ and an integer $\gamma $, such that $|d\beta-\gamma |\leq 1/p$. Let $\delta=\beta-\gamma /d$; then we have $|\delta|\leq 1/dp$. 

Since $p$ is an odd prime, $1\leq d \leq p-1$, and $b$ is not divisible by $p$, we can find a nonnegative integer $r<p$ such that $2drb \equiv -\gamma  \pmod p$. 

For each $0 \leq j \leq (M-1-r)/d$, let $Y_j=X_{dj+r}$. We compute
\begin{align*}
Y_j
&=\alpha^2(dj+r)^2+\beta (dj+r)+X_0\\
&=(b+\epsilon)(dj+r)^2+(\gamma /d+\delta)(dj+r)+X_0\\
&=b(d^2j^2+2drj+r^2)+\gamma j+ (\gamma r/d+ X_0)+\epsilon (dj+r)^2 +\delta( dj+r)\\
&=bd^2j^2+(2drb+\gamma )j +(br^2+\gamma r/d+X_0+\delta r)+\epsilon (dj+r)^2 +\delta dj.
\end{align*}
Let $$
Z_j:= (br^2+\gamma r/d+X_0+\delta r)+\epsilon (dj+r)^2 +\delta dj.
$$

Next, we consider two cases according to whether $\epsilon=0$. 

(1) Assume that $\epsilon=0$. Since $M=p^2$ and $r,d\leq p-1$, we have $(M-1-r)/d \geq p$. Thus, to show that there exists $c\in \F_p$ such that
$$bR_p+c\subseteq \{\lfloor X_k \rfloor: 0 \leq k \leq M-1 \},$$ 
it suffices to show that there exists $c\in \F_p$ such that
$$bR_p+c\subseteq \{\lfloor Y_j \rfloor: 0 \leq j \leq p \}.$$ 

Recall that $r$ is chosen so that $2drb+\gamma  \equiv 0 \pmod p$, and observe that 
$$
bR_p=\{bd^2i^2: 0 \leq i \leq (p-1)/2\}=\{bd^2i^2: (p+1)/2 \leq i \leq p\}
$$
as subsets of $\F_p$. Therefore, it suffices to show that $Z_j$ has the same integer part for all $j \in [0, (p-1)/2]$, or $Z_j$ has the same integer part for all $j \in [(p+1)/2, p]$. However, this follows from the observation that $Z_j=(br^2+\gamma r/d+X_0+\delta r)+\delta dj$ is monotone in $j$, and $|Z_{j+p}-Z_j|=|\delta| dp\leq 1$. 

(2) Assume that $\epsilon>0$. By the assumption, we have $\epsilon\leq (4p^5)^{-1}$. Since $M=2p^2-2p+1$ and $r,d\leq p-1$, we have $(M-1-r)/d \geq 2p-1$. Thus, to show that there exists $c\in \F_p$ such that
$$bR_p^*+c\subseteq \{\lfloor X_k \rfloor: 0 \leq k \leq M-1 \}, $$
it suffices to show that there exists $c\in \F_p$ such that
$$bR_p^*+c\subseteq \{\lfloor Y_j \rfloor: 0 \leq j \leq 2p-1 \}. $$

Note that in this case, the sequence $Z_j$ is quadratic in $j$ and thus it changes its monotonicity at most once. Therefore, there is $s \in \{0,1\}$, such that $Z_j$ is monotone for $j \in [sp, (s+1)p-1]$. Also, note that 
$$
bR_p^*=\{bd^2(sp+i)^2: 1 \leq i \leq (p-1)/2\}=\{bd^2(sp+i)^2: (p+1)/2 \leq i \leq p-1\},
$$
as subsets of $\F_p$. Therefore, it suffices to show that $Z_j$ has the same integer part for all $j \in [sp+1, sp+(p-1)/2]$, or $Z_j$ has the same integer part for all $j \in [sp+(p+1)/2, (s+1)p-1]$. However, this follows from the observation that $Z_j$ is monotone in $j$ for $j \in [sp,(s+1)p-1]$, and the fact that 
\[|Z_{(s+1)p-1}-Z_{sp}|\leq |\delta| d(p-1) +|\epsilon|M^2< \frac{p-1}{p}+4p^4|\epsilon|\leq \frac{p-1}{p}+\frac{1}{p}=1. \qedhere
\]
\end{proof}

Lemma~\ref{lem:red} and Lemma~\ref{lem:blue} imply the following corollary.

\begin{cor}\label{cor:redblue}
Let $p \in \{47,59,67,71,73, 79,83\}$. Assume that $\alpha_{red}^2 \in [1,1.5] \pmod p$ and $\alpha_{blue}^2=b+\epsilon$, where $b$ is an integer not divisible by $p$ and $0\leq \epsilon \leq (4p^5)^{-1}$. Let $\alpha=\alpha_{blue}/\alpha_{red}$. Then we have
\begin{enumerate}
    \item If $\epsilon=0$, then $\E^n \not \to (\ell_3, \alpha \ell_{p^2})$; 
    \item if $\epsilon>0$ and $p=59$, then $\E^n \not \to (\ell_3, \alpha \ell_{2p^2-2p+1})$.
\end{enumerate}
\end{cor}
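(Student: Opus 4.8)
\medskip
\noindent\textbf{Proof plan.}
The plan is to assemble Lemma~\ref{lem:red} and Lemma~\ref{lem:blue} at an auxiliary scale $\alpha_{red}$ and then rescale back to scale $1$. Fix $p \in \{47,59,67,71,73,79,83\}$, let $S \subset \F_p$ be the set attached to $p$ in Lemma~\ref{lem:red}, and let $\chi$ denote the red/blue coloring of $\E^n$ with red set $\mathcal R = \{x \in \E^n : \lfloor |x|^2 \rfloor \in S \pmod p\}$. Since $\alpha_{red}^2 \in [1,1.5] \pmod p$ by hypothesis, part~(2) of Lemma~\ref{lem:red} immediately gives that $\chi$ contains no red copy of $\alpha_{red}\ell_3$. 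That disposes of the red side at scale $\alpha_{red}$.

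Next I would rule out a blue copy of $\alpha_{blue}\ell_M$ under $\chi$, where $M = p^2$ if $\epsilon = 0$ and $M = 2p^2 - 2p + 1$ if $\epsilon > 0$ (in which case $p = 59$). Suppose $\{x_0,\dots,x_{M-1}\}$ were such a copy and set $X_k = |x_k|^2$. The hypotheses $\alpha_{blue}^2 = b + \epsilon$ with $p \nmid b$ and $0 \le \epsilon \le (4p^5)^{-1}$ are precisely what Lemma~\ref{lem:blue} requires, so it produces a $c \in \F_p$ with $b\F_p^2 + c \subseteq \{\lfloor X_k \rfloor : 0 \le k \le M-1\}$ when $\epsilon = 0$, and $b(\F_p^*)^2 + c \subseteq \{\lfloor X_k \rfloor : 0 \le k \le M-1\}$ when $\epsilon > 0$ (all as subsets of $\F_p$). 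When $\epsilon = 0$, part~(1) of Lemma~\ref{lem:red} yields $(b\F_p^2 + c) \cap S \ne \emptyset$; when $\epsilon > 0$ and $p = 59$, Remark~\ref{rem:59} yields the sharper $(b(\F_p^*)^2 + c) \cap S \ne \emptyset$, which is the version actually needed since the intersection point must avoid the shift $c$. Either way there is an index $k$ with $\lfloor X_k \rfloor \in S \pmod p$, i.e.\ $x_k \in \mathcal R$, contradicting that the copy is blue. Hence $\chi$ has no blue $\alpha_{blue}\ell_M$.

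Finally I would transfer both facts from scale $\alpha_{red}$ to scale $1$ by rescaling: define $\chi'$ on $\E^n$ by $\chi'(x) := \chi(\alpha_{red}\,x)$. A red congruent copy of $\ell_3$ under $\chi'$ scales up to a red congruent copy of $\alpha_{red}\ell_3$ under $\chi$, which does not exist; and, with $\alpha = \alpha_{blue}/\alpha_{red}$, a blue congruent copy of $\alpha\ell_M$ under $\chi'$ scales up to a blue congruent copy of $\alpha_{red}\cdot(\alpha_{blue}/\alpha_{red})\,\ell_M = \alpha_{blue}\ell_M$ under $\chi$, which also does not exist. Therefore $\chi'$ certifies $\E^n \not\to (\ell_3, \alpha\ell_M)$, and this is exactly statement~(1) when $M = p^2$ and statement~(2) when $M = 2p^2 - 2p + 1$.

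I do not expect a genuine obstacle: all the substantive estimates already reside in Lemmas~\ref{lem:red} and~\ref{lem:blue}, and this corollary is the bookkeeping that glues them together and removes the auxiliary scales. The only points needing attention are that the numerical hypotheses on $\epsilon$ and on $\alpha_{red}^2,\alpha_{blue}^2 \pmod p$ match the two lemmas verbatim, and that the $\epsilon > 0$ branch is invoked solely for $p = 59$, where Remark~\ref{rem:59} upgrades part~(1) of Lemma~\ref{lem:red} from $b\F_p^2+c$ to $b(\F_p^*)^2+c$.
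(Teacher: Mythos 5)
Your proposal is correct and follows essentially the same route as the paper: apply Lemma~\ref{lem:red}(2) to rule out red $\alpha_{red}\ell_3$, apply Lemma~\ref{lem:blue} together with Lemma~\ref{lem:red}(1) (upgraded by Remark~\ref{rem:59} when $\epsilon>0$ and $p=59$) to rule out blue $\alpha_{blue}\ell_M$, and then rescale to pass from $(\alpha_{red}\ell_3,\alpha_{blue}\ell_M)$ to $(\ell_3,\alpha\ell_M)$. Your parenthetical justification for why the $(\F_p^*)^2$ strengthening is needed is phrased a little loosely (the real reason is just that Lemma~\ref{lem:blue} only furnishes $b(\F_p^*)^2+c$ when $\epsilon>0$), but this does not affect the correctness.
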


\begin{proof}
Let $M=p^2$ if $\epsilon=0$, and $M=2p^2-2p+1$ if $\epsilon>0$. By scaling, it suffices to show that $\E^n \not \to (\alpha_{red}\ell_3, \alpha_{blue} \ell_{M})$. 

Using the coloring given in Lemma~\ref{lem:red}, it does not contain a red copy of $\alpha_{red}{\ell}_3$. Moreover, for each $c \in \F_p$, we have $(bR_p+c) \cap S \neq \emptyset$. When $p=59$, by Remark~\ref{rem:59}, we have the stronger statement that for each $c \in \F_p$, we have $(bR_p^*+c) \cap S \neq \emptyset$. Note that the set of blue points is given by
$$\mathcal{B}=\{x\in\mathbb{E}^n\ : \lfloor |x|^2 \rfloor \notin S \pmod p\}=\{x\in\mathbb{E}^n\ : \lfloor |x|^2 \rfloor \in \F_p \setminus S \pmod p\}.$$
Thus, Lemma~\ref{lem:blue} implies the given coloring does not contain a blue copy of $\alpha_{blue}{\ell}_M$. We conclude that $\E^n \not \to (\alpha_{red}\ell_3, \alpha_{blue} \ell_{M})$, as required.
\end{proof}

Now, we can put our tools together. The trick here will be to find $\alpha_{red}$ and $\alpha_{blue}$ that satisfy the conditions of the previous lemmas, where $\alpha = \alpha_{blue}/\alpha_{red}$. This will involve some casework; we start this in the following proposition and then finish below in the proof of Theorem \ref{thm:alpha}.

\begin{prop}\label{prop:rational}
Let $p \in \{47,59,67,71,73, 79,83\}$. Let $\alpha>0$. The following statements hold:
\begin{enumerate}
    \item If $\alpha^2$ is irrational, then we have $\E^n \not \to (\ell_3, \alpha \ell_{p^2})$.
    \item If $\alpha^2=a/b$ is a rational number in simplest form with $a,b>0$, such that $p \nmid b$ or $a>4p/3$, then we have $\E^n \not \to (\ell_3, \alpha \ell_{p^2})$.
\end{enumerate}
\end{prop}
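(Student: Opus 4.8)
The plan is to deduce both parts from Corollary~\ref{cor:redblue}(1). That corollary tells us that, for $p$ in the given list, it suffices to produce positive reals $\alpha_{red},\alpha_{blue}$ with $\alpha=\alpha_{blue}/\alpha_{red}$ such that $N:=\alpha_{blue}^2$ is a positive integer with $p\nmid N$ and $\alpha_{red}^2\in[1,1.5]\pmod p$. Since $\alpha_{red}^2=N/\alpha^2$, the whole task becomes: find a positive integer $N$ with $p\nmid N$ and $N/\alpha^2\in[1,1.5]\pmod p$; then set $\alpha_{blue}=\sqrt N$ and $\alpha_{red}=\sqrt{N/\alpha^2}$, and invoke Corollary~\ref{cor:redblue}(1).

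For part (1), I would let $\theta=1/\alpha^2$, which is irrational, and look only at $N=1+kp$, so that $p\nmid N$ is automatic. Writing $(1+kp)\theta=\theta+kp\theta$ and reducing modulo $p$, the contribution of $kp\theta$ is $p\,(k\theta\bmod 1)$, which is dense in $[0,p)$ because $\theta$ is irrational; adding the fixed constant $\theta\bmod p$ preserves density, so $(1+kp)\theta\bmod p$ lands in $[1,1.5]$ for some $k\ge 0$, which is exactly what we need.

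For part (2), write $\alpha^2=a/b$ in lowest terms, so $\alpha_{red}^2=Nb/a$. Since this quantity is positive and $1.5a<ap$ (as $p\ge 47$), the condition $Nb/a\in[1,1.5]\pmod p$ is equivalent to $Nb\bmod ap\in[a,1.5a]$. I would split on $\gcd(b,ap)=\gcd(b,p)\in\{1,p\}$. If $p\nmid b$, then $b$ is a unit mod $ap$, so it is enough to find an integer $j\in[a,1.5a]$ with $p\nmid j$ and take $N\equiv b^{-1}j\pmod{ap}$ (which forces $p\nmid N$); such a $j$ exists because for $a\le 3$ the interval $[a,1.5a]$ lies in $[1,4.5]$, all of whose integers are coprime to $p\ge 47$, while for $a\ge 4$ the interval has length $\ge 2$ and so contains two consecutive integers, at least one not divisible by $p$. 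If instead $p\mid b$, then $p\nmid a$ (lowest terms) and, by hypothesis, $a>4p/3$; write $b=pb'$ with $\gcd(b',a)=1$, so $Nb\bmod ap=p\,(Nb'\bmod a)$, and I need $Nb'\bmod a\in[a/p,1.5a/p]$. The elementary fact that $[x,1.5x]$ contains an integer whenever $x>4/3$ (for $x\ge 2$ use $\lceil x\rceil<x+1\le 1.5x$; for $4/3<x<2$ note $\lceil x\rceil=2\le 1.5x$) yields an integer $j$ with $1\le j\le a-1$ in $[a/p,1.5a/p]$; choosing $N_0$ with $N_0b'\equiv j\pmod a$ and then, via the Chinese Remainder Theorem (using $\gcd(a,p)=1$), an $N$ with $N\equiv N_0\pmod a$ and $N\equiv 1\pmod p$ gives $p\nmid N$ and $Nb\bmod ap=pj\in[a,1.5a]$. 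In either sub-case, Corollary~\ref{cor:redblue}(1) completes the proof.

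I expect the $p\mid b$ sub-case of part (2) to be the main obstacle: it is precisely where the hypothesis $a>4p/3$ is forced, since one must land $Nb'\bmod a$ inside the short window $[a/p,1.5a/p]$ — nonempty as a set of integers exactly because $a/p>4/3$ — while simultaneously keeping $N$ coprime to $p$, the latter handled by the CRT lift. Everything else is routine: the equivalence with $Nb\bmod ap\in[a,1.5a]$, the counting of integers in $[a,1.5a]$, and the density of an irrational rotation orbit.
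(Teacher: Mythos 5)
Your argument is correct and runs along the same high-level lines as the paper's: reduce to Corollary~\ref{cor:redblue}(1) by producing a positive integer $N=\alpha_{blue}^2$ with $p\nmid N$ and $\alpha_{red}^2=N/\alpha^2\in[1,1.5]\pmod p$, handle the irrational case by equidistribution of the orbit of $1/\alpha^2$, and do divisibility casework in the rational case. Within the rational case, though, your decomposition is different from the paper's and is a bit cleaner. The paper splits into three cases: ($p\nmid a$, $p\nmid b$), using $bm\equiv 1\pmod p$; ($p\mid a$, $p\nmid b$), using $bm\equiv 1\pmod{ap}$ and the value $1+1/a$; and ($p\mid b$), via the substitution $\alpha^2=a(1+x)/\bigl(b(1+x)\bigr)$ with $x=pmr/a$ where $r=\lceil a/p\rceil$. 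You collapse the first two into a single case by observing that when $p\nmid b$ the map $N\mapsto Nb\bmod ap$ is a bijection, so it suffices to hit \emph{any} integer $j\in[a,1.5a]$ with $p\nmid j$, and you verify such $j$ exists with a short count (covering both $p\mid a$ and $p\nmid a$ at once). For $p\mid b$ you replace the paper's algebraic substitution with a CRT lift, forcing $N\equiv 1\pmod p$ directly while solving $Nb'\equiv j\pmod a$; this makes transparent exactly where $a>4p/3$ is used, namely to guarantee $[a/p,1.5a/p]$ contains an integer. Both proofs are sound; yours is a touch more modular and arguably easier to audit.
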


\begin{proof}
In each of the following cases, we construct $\alpha_{red}$ and $\alpha_{blue}$ satisfying the assumptions in Corollary~\ref{cor:redblue} with $\epsilon=0$ and $\alpha=\alpha_{blue}/\alpha_{red}$, so the statement of the proposition follows from Corollary~\ref{cor:redblue}(1).

(1) Assume that $\alpha^2$ is irrational.  Since $1/\alpha^2$ is irrational, the sequence $(m/\alpha^2)_{m \in \N}$ is uniformly distributed modulo $1$ by Weyl's criterion \cite[Example 2.1]{KN74}. It follows that there is a positive integer $m$ such that
$$
\frac{pm+1}{p\alpha^2}=\frac{m}{\alpha^2}+\frac{1}{p\alpha^2} \in \bigg[\frac{1}{p}, \frac{1.5}{p}\bigg] \pmod 1,
$$
and thus $(pm+1)/\alpha^2 \in [1,1.5] \pmod p$. Let $\alpha_{blue}=\sqrt{pm+1}$ and $\alpha_{red}=\sqrt{pm+1}/\alpha$. Then clearly $\alpha=\alpha_{blue}/\alpha_{red}$. Note that $\alpha_{red}^2 =(pm+1)/\alpha^2 \in [1,1.5] \pmod p$, and $\alpha_{blue}^2=pm+1$ is an integer and it is not divisible by $p$. Thus, Corollary~\ref{cor:redblue} (1) implies that $\E^n \not \to (\ell_3, \alpha \ell_{p^2})$.

(2) We consider three different cases according to whether $a$ and $b$ are divisible by $p$. 

\textbf{Case 1: $p \nmid b$ and $p \nmid a$.} Since $p \nmid b$,  we can find a positive integer $m$ such that $bm \equiv 1 \pmod p$. Let $\alpha_{blue}=\sqrt{am}$ and $\alpha_{red}=\sqrt{bm}$. Then clearly $\alpha=\alpha_{blue}/\alpha_{red}$. Note that $\alpha_{red}^2 =bm \equiv 1 \pmod p$, and $\alpha_{blue}^2=am$ is an integer not divisible by $p$. 

\textbf{Case 2: $p \nmid b$ and $p \mid a$.} Since $\gcd(b, ap)=1$, we can find a positive integer $m$ such that $bm \equiv 1 \pmod {ap}$. Let $\alpha_{blue}=\sqrt{(a+1)m}$ and $\alpha_{red}=\sqrt{(a+1)bm/a}$; then clearly $\alpha=\alpha_{blue}/\alpha_{red}$. Note that 
$$
\alpha_{red}^2 =\frac{(a+1)bm}{a}=bm+\frac{bm}{a} \equiv 1+\frac{1}{a} \in [1,1.5] \pmod p,
$$
and $\alpha_{blue}^2=(a+1)m$ is an integer not divisible by $p$. 

\textbf{Case 3: $p \mid b$ and $a>4p/3$.} Let $r=\lceil a/p \rceil$. We claim that
\begin{align}
\frac{1}{p}\leq \frac{r}{a} \leq \frac{1.5}{p}.\label{line:ra}
\end{align}
Indeed, if $a \geq 2p$, then it is clear that $pr<a+p\leq 1.5a$; if $4p/3<a<2p$, then we have $r=2$ and thus $pr=2p<1.5a$.

Since $\gcd(a,b)=1$, we can find a positive integer $m$ such that $bm \equiv 1 \pmod a$. Let $x=pmr/a$. Then we have
$$
\alpha^2=\frac{a}{b}=\frac{a(1+x)}{b(1+x)}=\frac{a+pmr}{b+\frac{bpmr}{a}}
$$
Since $bmr \equiv r \pmod a$, by inequality~\eqref{line:ra}, we can find an integer $N$ such that $$pN+1\leq \frac{bpmr}{a} \leq pN+1.5.$$ Let $\alpha_{blue}=\sqrt{a+pmr}$ and $\alpha_{red}=\sqrt{b+\frac{bpmr}{a}}$; then $\alpha=\alpha_{blue}/\alpha_{red}$. Note that $\alpha_{red}^2 \in [1,1.5] \pmod p$, and $\alpha_{blue}^2$ is an integer not divisible by $p$ since $p \nmid a$.

In all of the above three cases, Corollary~\ref{cor:redblue} (1) implies that $\E^n \not \to (\ell_3, \alpha \ell_{p^2})$.
\end{proof}

Now we have all the ingredients to prove Theorem~\ref{thm:alpha}.

\begin{proof}[Proof of Theorem~\ref{thm:alpha}]
If $\alpha^2$ is irrational, then by Proposition~\ref{prop:rational} (1), we have $\E^n \not \to (\ell_3, \alpha \ell_{2209})$. Next, assume that  $\alpha^2=a/b$ is a rational number in simplest form. 

Note that if there is a prime $p \in \{47,59,67,71,73, 79,83\}$, such that $p \nmid b$ or $a>4p/3$, then we have $\E^n \not \to (\ell_3, \alpha \ell_{p^2})$ by Proposition~\ref{prop:rational} (2) and in particular $\E^n \not \to (\ell_3, \alpha \ell_{6889})$ holds. Next, we assume that this is not the case. Then we must have $a\leq \frac{4 \cdot 47}{3}$, that is, $a\leq 62$. Also, we have $p \mid b$ for each $p \in \{47,59,67,71,73, 79,83\}$. It follows that 
$$b \geq 47 \cdot 59 \cdot 67  \cdot 71 \cdot 73 \cdot 79 \cdot 83>4 \cdot 59^5 \cdot a.$$
Let $\alpha_{blue}=\sqrt{a+a/b}$ and $\alpha_{red}=\sqrt{b+1}$; then $\alpha=\alpha_{blue}/\alpha_{red}$. Note that $\alpha_{red}^2=b+1 \equiv 1 \pmod {59}$, and $\alpha_{blue}^2=a+\frac{a}{b}$ with $59 \nmid a$ and $0<a/b<(4 \cdot 59^5)^{-1}$. Now Corollary~\ref{cor:redblue} (2) implies that  $\E^n \not \to (\ell_3, \alpha \ell_{6845})$.  
\end{proof}

\section*{Acknowledgments}
The authors thank J\'ozsef Solymosi and Joshua Zahl for helpful discussions. The authors also thank the anonymous referees for their many valuable comments and suggestions, and in particular to Arsenii Sagdeev for helping us improve the results of theorem \ref{thm:l3}. 

\bibliographystyle{abbrv}
\bibliography{references}

\begin{thebibliography}{10}

\bibitem{AT18}
A.~Arman and S.~Tsaturian.
\newblock A result in asymmetric {E}uclidean {R}amsey theory.
\newblock {\em Discrete Math.}, 341(5):1502--1508, 2018.

\bibitem{CF19}
D.~Conlon and J.~Fox.
\newblock Lines in {E}uclidean {R}amsey theory.
\newblock {\em Discrete Comput. Geom.}, 61(1):218--225, 2019.

\bibitem{CF24}
D.~Conlon and J.~F{\"u}hrer.
\newblock Non-spherical sets versus lines in {E}uclidean {R}amsey theory, 2024.
\newblock arXiv:2406.07718.

\bibitem{CW23}
D.~Conlon and Y.-H. Wu.
\newblock More on lines in {E}uclidean {R}amsey theory.
\newblock {\em C. R. Math. Acad. Sci. Paris}, 361:897--901, 2023.

\bibitem{CT94}
G.~Csizmadia and G.~T\'{o}th.
\newblock Note on a {R}amsey-type problem in geometry.
\newblock {\em J. Combin. Theory Ser. A}, 65(2):302--306, 1994.

\bibitem{CMY}
G.~{Currier}, K.~{Moore}, and C.~H. {Yip}.
\newblock {Any two-coloring of the plane contains monochromatic 3-term arithmetic progressions}.
\newblock {\em Combinatorica}, 44(6):1367--1380, 2024.

\bibitem{ResourcesProgression}
G.~Currier, K.~Moore, and C.~H. Yip.
\newblock Resources for `{A}voiding short progressions in {E}uclidean {R}amsey theory', 2025.
\newblock \url{https://personal.math.ubc.ca/~kjmoore/sphericalresources.html}.

\bibitem{E73}
P.~Erd\H{o}s, R.~L. Graham, P.~Montgomery, B.~L. Rothschild, J.~Spencer, and E.~G. Straus.
\newblock Euclidean {R}amsey theorems. {I}.
\newblock {\em J. Combinatorial Theory Ser. A}, 14:341--363, 1973.

\bibitem{E75a}
P.~Erd\H{o}s, R.~L. Graham, P.~Montgomery, B.~L. Rothschild, J.~Spencer, and E.~G. Straus.
\newblock Euclidean {R}amsey theorems. {II}.
\newblock In {\em Infinite and finite sets ({C}olloq., {K}eszthely, 1973; dedicated to {P}. {E}rd\H{o}s on his 60th birthday), {V}ols. {I}, {II}, {III}}, volume Vol. 10 of {\em Colloq. Math. Soc. J\'{a}nos Bolyai}, pages 529--557. North-Holland, Amsterdam-London, 1975.

\bibitem{E75b}
P.~Erd\H{o}s, R.~L. Graham, P.~Montgomery, B.~L. Rothschild, J.~Spencer, and E.~G. Straus.
\newblock Euclidean {R}amsey theorems. {III}.
\newblock In {\em Infinite and finite sets ({C}olloq., {K}eszthely, 1973; dedicated to {P}. {E}rd\H{o}s on his 60th birthday), {V}ols. {I}, {II}, {III}}, volume Vol. 10 of {\em Colloq. Math. Soc. J\'{a}nos Bolyai}, pages 559--583. North-Holland, Amsterdam-London, 1975.

\bibitem{FT24}
J.~F\"uhrer and G.~T\'oth.
\newblock Progressions in {E}uclidean {R}amsey theory.
\newblock {\em European J. Combin.}, 125:Paper No. 104105, 2025.

\bibitem{GOT18}
J.~E. Goodman, J.~O'Rourke, and C.~D. T\'{o}th, editors.
\newblock {\em Handbook of discrete and computational geometry}.
\newblock Discrete Mathematics and its Applications (Boca Raton). CRC Press, Boca Raton, FL, third edition, 2018.

\bibitem{J79}
R.~Juh\'{a}sz.
\newblock Ramsey type theorems in the plane.
\newblock {\em J. Combin. Theory Ser. A}, 27(2):152--160, 1979.

\bibitem{KN74}
L.~Kuipers and H.~Niederreiter.
\newblock {\em Uniform distribution of sequences}.
\newblock Pure and Applied Mathematics. Wiley-Interscience [John Wiley \& Sons], New York-London-Sydney, 1974.

\bibitem{S80}
W.~M. Schmidt.
\newblock {\em Diophantine approximation}, volume 785 of {\em Lecture Notes in Mathematics}.
\newblock Springer, Berlin, 1980.

\bibitem{S76}
L.~E. Shader.
\newblock All right triangles are {R}amsey in {$E\sp{2}$}!
\newblock {\em J. Combinatorial Theory Ser. A}, 20(3):385--389, 1976.

\bibitem{S01}
A.~D. Szlam.
\newblock Monochromatic translates of configurations in the plane.
\newblock {\em J. Combin. Theory Ser. A}, 93(1):173--176, 2001.

\bibitem{T17}
S.~Tsaturian.
\newblock A {E}uclidean {R}amsey result in the plane.
\newblock {\em Electron. J. Combin.}, 24(4):Paper No. 4.35, 9, 2017.

\end{thebibliography}

\end{document}